\newtheorem{theorem}{Theorem}[section]
\newtheorem{lemma}[theorem]{Lemma}
\newtheorem{proposition}[theorem]{Proposition}
\newtheorem{corollary}[theorem]{Corollary}
\theoremstyle{definition}
\newtheorem{definition}[theorem]{Definition}
\theoremstyle{remark}
\newtheorem{remark}[theorem]{Remark}
\newtheorem{example}[theorem]{Example}
\newcommand\pf{\begin{proof}}
\newcommand\epf{\end{proof}}
\newcommand\C{\mathbb{C}}
\newcommand\coc{\mathcal C}
\newcommand\yd{\mathcal{YD}}
\newcommand\cd{\mathrm{cd}}
\newcommand\pd{\mathrm{pd}}
\DeclareMathOperator{\Hom}{Hom}
\numberwithin{equation}{section}
\title{Twisted separability for adjoint functors}
\author{Julien Bichon}
\address{ Universit\'e Clermont Auvergne, CNRS, LMBP, F-63000 CLERMONT-FERRAND, FRANCE}
\email{julien.bichon@uca.fr}
\subjclass[2010]{16T05, 16E10, 18G20, 18A40}
\begin{document}

%\selectlanguage{english}
\begin{abstract}
Twisted separable functors generalize the separable functors of Nastasescu, Van den Bergh and Van Oystaeyen, and provide a convenient tool to compare various projective dimensions. We discuss when an adjoint functor is twisted separable, obtaining a version of Rafael's Theorem in the twisted case. As an application, we show that if $R$ is Hopf-Galois object over a Hopf algebra $A$, then their Hochschild cohomological dimension coincide, provided that the cohomological dimension of $A$ is finite and that $R$ has a unital twisted trace with respect to a semi-colinear automorphism.
\end{abstract}

\maketitle

%\selectlanguage{french}

%\selectlanguage{english}

%\tableofcontents

\section{Introduction}

Separable functors, introduced by Nastasescu, Van den Bergh and Van Oystaeyen \cite{nvdbvo},  provide an elegant and powerful categorical setting for proving various types of generalized Maschke theorems. We refer to  \cite{camizh} for a systematic presentation and many examples in the setting of generalized Hopf modules categories.

One nice feature of separable functors is that they reasonably preserve projective dimensions whenever these are defined: if $F  : \mathcal{C}\to \mathcal{D}$ is an exact functor between abelian categories having enough projective objects, then if $F$ preserve projectives and is separable, we have $\pd_{\mathcal{C}}(V) = \pd_{\mathcal{D}}(F(V))$ for any object $V$ in $\mathcal{C}$.

Motivated by the question of the monoidal invariance of the cohomological dimension of Hopf algebras, the notion of twisted separable functor was introduced in \cite{bi22}. Twisted separable functors provide a flexible generalization of separable functors, having a similar property of preservation of projective dimensions when these are finite. They were used to show the following key result in \cite{bi22} (Theorem 23 there): 
%\begin{theorem}\cite[Theorem 23]{bi22} \label{thm:cdhgcs}
\textsl{if $A$ is a  Hopf  algebra and and $R$ is a Hopf-Galois object over $A$, then if $A$ is cosemisimple and $\cd(A)$ is finite, we have $\cd(A)=\cd(R)$.} 
Here $\cd(R)$ denotes the Hochschild cohomological dimension, i.e. the projective dimension of $R$ as an $R$-bimodule. 

In this paper we develop some more theory on twisted separable functors. We are particularly interested in Rafael's theorem, a very convenient characterization of separability for functors that are adjoint functors, given in \cite[Theorem 1.2]{raf}. We provide a sufficient condition on a left adjoint functor to be twisted separable in the spirit of \cite{raf}, see Theorem \ref{thm:raftw}, which enables us to replace the cosemisimplicity condition on $A$ in the above result by a  condition on $R$, as follows.

\begin{theorem} \label{thm:cdhgtt}
Let $A$ be Hopf algebra with bijective antipode and let $R$ be a Hopf-Galois object over $A$. If $R$ has a unital twisted trace with respect to a semi-$A$-colinear automorphism of $R$ and $\cd(A)$ is finite, we have $\cd(A)=\cd(R)$. 
\end{theorem}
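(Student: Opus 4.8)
The plan is to realize the equality $\cd(A) = \cd(R)$ as an instance of the general machinery of twisted separable functors applied to a suitable adjoint pair, using the announced twisted version of Rafael's Theorem (Theorem \ref{thm:raftw}). Since $\cd(R)$ is the projective dimension of $R$ as an $R$-bimodule and $\cd(A)$ is the projective dimension of the trivial module (equivalently, computed in the relevant category of Yetter--Drinfeld or Hopf modules), the strategy is to compare these two projective dimensions through an exact functor $F$ relating the bimodule category of $R$ to a module category over $A$. The Hopf--Galois condition is precisely what provides such a comparison: it yields an equivalence (or at least a well-behaved adjunction) between $R$-bimodules and the category of relative Hopf modules, in the spirit of the Schneider-type structure theorems that underlie \cite{bi22}.

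First I would set up the correct adjoint pair $(F, G)$ between the two abelian categories whose projective dimensions encode $\cd(R)$ and $\cd(A)$ respectively, and verify that $F$ is exact and preserves projective objects, so that the general preservation principle for twisted separable functors recalled in the introduction applies once twisted separability of $F$ is established. Second, I would unpack the hypotheses: the bijectivity of the antipode guarantees that the Hopf--Galois object $R$ has the good structural properties (in particular that the relevant comparison functor is an equivalence of abelian categories, making both categories have enough projectives), while the unital twisted trace with respect to a semi-$A$-colinear automorphism is the data that will be fed into the twisted Rafael criterion. Third, I would invoke Theorem \ref{thm:raftw}: to prove $F$ is twisted separable it suffices to produce a natural transformation of the form prescribed there, and the claim is that the unital twisted trace furnishes exactly this natural transformation, with the semi-colinearity ensuring it is a morphism in the right category and the twist accounting for the Nakayama-type automorphism discrepancy between $R$ and $A$.

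The main obstacle I expect is the verification that the twisted trace assembles into the natural transformation required by Theorem \ref{thm:raftw}, and in particular checking that the twist governing twisted separability is compatible with the semi-colinear automorphism in a way that makes the diagrams commute. Concretely, one must show that unitality of the trace yields the splitting condition (the analogue of $F \circ G \Rightarrow \id$ composing to the identity up to the twist) and that semi-$A$-colinearity makes the trace $A$-colinear after the appropriate transport of structure; this is where the bijectivity of the antipode is genuinely used, to move comodule structures past the twist. Once $F$ is shown to be twisted separable and $\cd(A)$ is finite, the preservation property gives $\cd(R) = \cd(A)$ directly, since finiteness of one projective dimension together with twisted separability forces the equality rather than a mere inequality.

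Finally, I would double-check the direction of the adjunction and which of $\cd(A)$, $\cd(R)$ plays the role of $\pd_{\mathcal{C}}$ versus $\pd_{\mathcal{D}}$, since the twisted Rafael criterion is stated for a \emph{left} adjoint and the preservation of projectives must hold on the correct side; reconciling this with the fact that $R$ need not be cosemisimple (the hypothesis we are removing) is precisely the payoff of replacing cosemisimplicity by the existence of the unital twisted trace.
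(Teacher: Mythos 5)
Your skeleton is the paper's: form the canonical adjunction, prove the left adjoint is twisted separable via the twisted Rafael criterion (Theorem \ref{thm:raftw}), and conclude by the preservation of finite projective dimensions (Proposition \ref{prop:pdtsf}). But two points need correction. First, the relevant pair is $F_R : {_A\mathcal M} \to {_R\mathcal M}_R$, $V \mapsto V \odot R$, with right adjoint the Miyashita--Ulbrich restriction functor $\kappa_*$ (Proposition \ref{prop:adjcan}); the $A$-side is the category of \emph{ordinary} left $A$-modules, with $\cd(A)=\pd_A(k)$, not a category of relative Hopf modules or Yetter--Drinfeld modules. Second, your assertion that bijectivity of the antipode makes this comparison functor an equivalence of abelian categories is false, and would prove too much: if $F_R$ were an equivalence, projective dimensions would be preserved automatically and no twisted-trace (nor finiteness) hypothesis would be needed; moreover the paper's own remark about Taft algebras, which have matrix algebras as Galois objects with $\cd(R)=0<\infty=\cd(A)$, rules out any such equivalence in general. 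Bijectivity of $S$ is used at a different, concrete spot: the natural transformation is defined by $\psi_V(v\otimes x)=\psi(x_{(0)})\, S^{-1}(x_{(-1)})\cdot v$, and $S^{-1}$ is what converts the $A$-comodule structure of $R$ into an action on $V$.

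The genuine gap is the step you flag as ``the main obstacle'' and then leave as an assertion: that the unital twisted trace with semi-colinear automorphism actually furnishes a twisted separability datum and the natural transformation $\nu$ required in Theorem \ref{thm:raftw}(1). This is the entire technical content of the paper's Theorem \ref{thm:tshg}, and it is not formal. One must (i) convert the twisted-trace condition into a twisted Miyashita--Ulbrich invariance, $\psi\left(\theta(a^{[1]})xa^{[2]}\right)=\varepsilon(a)\psi(x)$, which the paper does via the cogroupoid formalism and the relation $\sigma\theta=S_{Y,X}S_{X,Y}$ (Lemma \ref{lem:tt-my}); (ii) choose the autoequivalence $\Theta=(\theta\otimes\id_R)_*$ of ${_R\mathcal M}_R$ and the class $\mathcal F$ of \emph{free} $A$-modules, building the isomorphisms $\delta_P : F_R(P)\to\Theta F_R(P)$ out of the Galois canonical map --- the choice of $\mathcal F$ matters, since Proposition \ref{prop:pdtsf}(1) requires $\mathcal F$ to contain a generating subclass of projectives; and (iii) verify, by a computation using semi-colinearity of $\sigma$ (equivalently of $\theta$), that each $\psi_V$ is $A$-linear and that $\psi_P\circ\delta_P\circ\eta_P=\id_P$ on free modules. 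Without (i)--(iii) your proposal is a correct plan rather than a proof; with them it becomes exactly the paper's argument, and the final dimension count $\cd(A)=\pd_A(k)=\pd_{_R\mathcal M_R}(F_R(k))=\pd_{_R\mathcal M_R}(R)=\cd(R)$ goes through as you describe.
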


The notion of  twisted trace in the statement is the following one: a twisted trace on $R$ is a linear map $\varphi : R \to k$ for which there exists an algebra automorphism $\sigma$ of $R$ such that $\varphi(xy) = \varphi(y\sigma(x))$ for any $x,y \in R$. The twisted trace is said to be unital if $\varphi(1)=1$. We refer to Section \ref{sec:apphg} for the notion of semi-colinearity for a twisted trace, but notice that a trace in the usual sense (i.e. with $\sigma$ above being the identity map) is always semi-colinear.  

The strategy to prove Theorem \ref{thm:cdhgtt} is as follows. If $A$ is a Hopf algebra and $R$ is a left $A$-comodule algebra, there is always an associated ``canonical'' functor  $F_R : {_A \mathcal M} \longrightarrow {_R\mathcal M}_R$ sending the trivial $A$-module $k$ to the $R$-bimodule $R$. Since the cohomological dimension of $A$ equals $\pd_A(k)$, the projective dimension of $k$ as an $A$-module  (see \cite{giku} for example), the comparison of $\cd(A)$ and $\cd(R)$ boils down to the question of the preservation of projective dimensions by $F_R$. The assumptions in Theorem \ref{thm:cdhgtt} then ensure the twisted separability of $F_R$ for $R$ Hopf-Galois (see Theorem  \ref{thm:tshg}), and hence the statement will follow from the preservation of finite projective dimensions by a twisted separable functor.

The paper is organized as follows. 
Section \ref{sec:tsf} recalls the notion of twisted separable functor. In Section \ref{sec:tsfa} we give the characterization of twisted separability for adjoint functors. Section \ref{sec:apphg} is devoted to study the twisted separability of the canonical functor  $F_R : {_A \mathcal M} \longrightarrow {_R\mathcal M}_R$ associated to a left Galois object $R$ over a Hopf algebra $A$, which then is used to prove Theorem \ref{thm:cdhgtt}. In the final Section \ref{sec:tsfmod} we discuss twisted separability in the classical situation of functors between categories of modules.

\medskip

\noindent
\textbf{Notations and conventions.}
We work over a fixed field $k$. The category of left modules over an algebra $R$ is denoted ${_R \mathcal M}$, the category of right $R$-modules is denoted $\mathcal M_R$, the category of $R$-bimodules is denoted  ${_R\mathcal M}_R $.
We assume that the reader is familiar with the theory of Hopf algebras and their modules and comodules, as e.g. in \cite{mon}, and with the basics of homological algebra \cite{wei}, and most notably projective dimensions.
If $A$ is a Hopf algebra, as usual, $\Delta$, $\varepsilon$ and $S$ stand respectively for the comultiplication, counit and antipode of $A$. We use Sweedler's notations in the standard way. 
The trivial $A$-module corresponding to the counit of $A$ is denoted $k$.

\section{Twisted separable functors}\label{sec:tsf}

In this section we recall the notion of twisted separable functor from \cite{bi22}, that we supplement with some new additional vocabulary.

\subsection{Definition and basic examples}

If $\mathcal C$ is category, we say that a subclass $\mathcal{F}$ of objects of $\mathcal{C}$ is generating if for every object $V$ of $\mathcal{C}$, there exists an object $P$ of $\mathcal{F}$ together with an epimorphism $P \to V$.

\begin{definition}\label{def:tsf}
Let $\mathcal C$ and $\mathcal D$ be some categories. A functor $F : \mathcal{C} \to \mathcal{D}$ is said to be \textsl{twisted separable} if there exist
\begin{enumerate}
	\item an autoequivalence $\Theta$ of the category $\mathcal{D}$;
	\item a generating subclass $\mathcal{F}$ of objects of $\mathcal{C}$  together with,  for any object $P$ of $\mathcal{F}$, an isomorphism $\theta_P : F(P)\to \Theta F(P)$;
	\item a natural morphism $\textbf{M}_{-,-} : \Hom_{\mathcal{D}}(F(-),\Theta F(-)) \to \Hom_{\mathcal{C}}(-,-)$ such that for any object $P$ of $\mathcal{F}$, we have $\textbf{M}_{P,P}(\theta_P)= {\rm id}_P$.
\end{enumerate}
A triple $(\Theta, \mathcal F, \theta_{-})$ as above is said to be a \textsl{twisted separability datum} for $F$.
\end{definition}

The naturality condition above means that for any morphisms $\alpha : V'\to V$, $\beta : W \to W'$ in $\mathcal{C}$ and any morphism $f : F(V) \to \Theta F(W)$ in $\mathcal{D}$, we have 
$$\beta \circ {\bf M}_{V,W}(f) \circ \alpha = {\bf M}_{V',W'}(\Theta F(\beta) \circ f \circ F(\alpha))$$

A twisted separable functor  $F : \mathcal{C} \to \mathcal{D}$ with twisted separability datum $({\rm id}_\mathcal{C}, {\rm ob}(\mathcal C), {\rm id})$ is a separable functor in the sense of 
\cite{nvdbvo}. 
\begin{example}
 Let $G$ a finite group and let $\Omega : {\rm Rep}(G) \to  {\rm Vec}_k$ be the forgetful functor from the category of $k$-linear representations of $G$ to the category of vector spaces. A version of the classical Maschke theorem is that $\Omega$ is separable if and only if $|G|\not = 0$ in $k$. 
\end{example}

The following example from \cite{bi22} shows that twisted separability is indeed less restrictive than separability.

\begin{example} \label{ex:ydsep}
 Let $A$ be a Hopf algebra. Recall that a (right-right) Yetter-Drinfeld
module over $A$ is a right $A$-comodule and right $A$-module $V$
satisfying the condition, $\forall v \in V$, $\forall a \in A$, 
$$(v \cdot a)_{(0)} \otimes  (v \cdot a)_{(1)} =
v_{(0)} \cdot a_{(2)} \otimes S(a_{(1)}) v_{(1)} a_{(3)}$$
The category of Yetter-Drinfeld modules over $A$ is denoted $\yd_A^A$:
the morphisms are the $A$-linear and $A$-colinear maps. The forgetful functor $\Omega_{A} :\yd_A^{A} \to \mathcal M_{A}$ is  separable if and only if $A$ is cosemisimple and $S^4={\rm id}$, by \cite[Theorem 33]{bi22}, while  $\Omega_{A} :\yd_A^{A} \to \mathcal M_{A}$ is twisted separable if $A$ is cosemisimple by \cite[Proposition 30]{bi22}.
\end{example}

\subsection{Projective dimensions}
The notion of twisted separable functor is motivated by the following result, which proved useful to compare various projective dimensions in \cite{bi22}. The first statement is \cite[Proposition 14]{bi22}, and the second one is the well-known preservation of projective dimension by separable functors, for which we do not have a reference, but whose proof can be deduced from that of  \cite[Proposition 14]{bi22}, using \cite[Lemma 16]{bi22}.

\begin{proposition}\label{prop:pdtsf}
Let $\mathcal{C}$ and $\mathcal{D}$ be abelian categories having enough projective objects, and let $F : \mathcal{C} \to \mathcal{D}$ be a functor. Assume that $F$ is exact and preserves projective objects.
\begin{enumerate}
	\item If $F$ is twisted separable and $\mathcal F$, the corresponding class of objects of  $\mathcal{C}$, contains a generating subclass $\mathcal F_0$ consisting of projective objects, then,  for any object $V$ of $\mathcal{C}$ such that $\pd_{\mathcal{C}}(V)$ is finite, we have $\pd_{\mathcal{C}}(V) = \pd_{\mathcal{D}}(F(V))$. 
\item If $F$ is separable, then,  for any object $V$ of $\mathcal{C}$, we have $\pd_{\mathcal{C}}(V) = \pd_{\mathcal{D}}(F(V))$. 
\end{enumerate}
\end{proposition}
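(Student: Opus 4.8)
The plan is to prove the two inequalities $\pd_{\mathcal D}(F(V))\le\pd_{\mathcal C}(V)$ and $\pd_{\mathcal C}(V)\le\pd_{\mathcal D}(F(V))$ separately. One of them is automatic and requires no separability at all: since $F$ is exact and sends projectives to projectives, applying $F$ to a projective resolution of $V$ of length $\pd_{\mathcal C}(V)$ produces a projective resolution of $F(V)$ of the same length, whence $\pd_{\mathcal D}(F(V))\le\pd_{\mathcal C}(V)$. Everything therefore reduces to the reverse inequality $\pd_{\mathcal C}(V)\le\pd_{\mathcal D}(F(V))$.

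For the reverse inequality the central tool I would extract from the twisted separability datum is a reflection principle for split monomorphisms: \emph{if $X$ is a retract of some object of $\mathcal F$ and $i:X\to Y$ is a morphism in $\mathcal C$ such that $F(i)$ is a split monomorphism in $\mathcal D$, then $i$ is a split monomorphism.} I would prove this directly from the two axioms of Definition \ref{def:tsf}. Writing $X$ as a retract $X\xrightarrow{u}P\xrightarrow{v}X$ with $P\in\mathcal F$ (so $v\circ u=\id_X$) and letting $r$ be a retraction of $F(i)$, one sets $\tilde r:=\mathbf{M}_{Y,P}(\theta_P\circ F(u)\circ r)$ and checks, using only the naturality of $\mathbf{M}$ together with the normalization $\mathbf{M}_{P,P}(\theta_P)=\id_P$, that $\tilde r\circ i=u$; then $v\circ\tilde r$ is a retraction of $i$. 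Allowing $X$ to be merely a retract of an object of $\mathcal F$, rather than to lie in $\mathcal F$ itself, is precisely what will let the principle reach the syzygies that occur lower in a resolution, since those need not belong to $\mathcal F$.

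With this in hand, part (1) follows from a dimension count. Fix $V$ with $d:=\pd_{\mathcal C}(V)$ finite and put $m:=\pd_{\mathcal D}(F(V))\le d$. Choosing a projective resolution of $V$ whose terms lie in the generating projective subclass $\mathcal F_0\subseteq\mathcal F$, standard dimension shifting shows that the $m$-th syzygy $K$ satisfies $F(K)$ projective while $\pd_{\mathcal C}(K)=d-m$ is finite. It thus suffices to show that an object $W$ of finite projective dimension with $F(W)$ projective is itself projective, for applied to $W=K$ this forces $d-m=0$. I would prove this last claim by contradiction: take a minimal $\mathcal F_0$-resolution of $W$ and inspect its bottom short exact sequence $0\to\Omega^e\to Q_{e-1}\to\Omega^{e-1}\to 0$, where $e=\pd_{\mathcal C}(W)\ge 1$. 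Since $F(W)$ is projective, an induction along the resolution shows all $F(\Omega^i)$ are projective, so this sequence becomes split after applying $F$; in particular the inclusion $\Omega^e\hookrightarrow Q_{e-1}$ has split image under $F$. As $\Omega^e$ is projective it is a retract of an object of $\mathcal F_0\subseteq\mathcal F$, so the reflection principle applies and shows the inclusion already splits in $\mathcal C$, making $\Omega^{e-1}$ projective and contradicting $e\ge 1$.

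For part (2), separability is the special case $\Theta=\id$, $\mathcal F=\mathrm{ob}(\mathcal C)$, $\theta=\id$, so every object lies in $\mathcal F$ and the same computation now also reflects split epimorphisms: if $F(p)$ is a split epimorphism then so is $p$. This removes the finiteness hypothesis and permits a clean induction on $m=\pd_{\mathcal D}(F(V))$: for $m=0$ one reflects the split epimorphism $F(P)\to F(V)$ coming from an epimorphism $P\to V$ with $P$ projective, so $V$ is a retract of $P$, hence projective; the inductive step passes to the first syzygy. The main obstacle throughout is that the isomorphisms $\theta_P$ are not assumed natural in $P$, so one cannot transport a splitting of a resolution in $\mathcal D$ back to $\mathcal C$ term by term. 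The retract formulation of the reflection principle is exactly what sidesteps this difficulty, since it invokes $\theta$ only at a single object of $\mathcal F$.
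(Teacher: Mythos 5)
Your proof is correct and follows essentially the same route as the paper's own argument (the paper defers to \cite{bi22} for this proposition, and the proof there is precisely of this shape): the easy inequality $\pd_{\mathcal{D}}(F(V))\le\pd_{\mathcal{C}}(V)$ from exactness and preservation of projectives, then dimension shifting along a resolution by objects of $\mathcal F_0$, and a splitting transferred back to $\mathcal C$ by evaluating $\mathbf{M}$ against $\theta_P$ at a single object $P$ of $\mathcal F_0$, with your retract trick handling the projective syzygy that need not itself lie in $\mathcal F_0$. Your deduction of part (2) from split-epimorphism reflection in the separable case is likewise the standard argument the paper alludes to via \cite[Lemma 16]{bi22}.
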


\subsection{Twisted separability of standard type}
The twisted separability data considered in \cite{bi22} were of a special type. For future use and convenience, we formalize this. Let $\varphi : A \to B$ be an algebra automorphism. Then $\varphi$ induces the restriction functor
\begin{align}\varphi_* : {_B \mathcal M} \to {_A\mathcal M} \label{thetastar}\end{align}
sending the left $B$-module $M$ to the left $A$-module $\varphi_*(M)$ having $M$ as underlying vector space and $A$-module structure given by $a\cdot m\ = \varphi(a).m$, for $m \in M$ and $a\in R$. 

If $\theta : R \to R$ is an algebra automorphism,
and $P$ is a free $R$-module, there is a canonical $R$-module isomorphism 
\begin{align}\theta_P : P \to \theta_{*}(P) \label{thetaP}\end{align}
 defined as follows. Fix an $R$-module isomorphism $f : P \to R\otimes V$ for some vector space $V$, and define $\theta_P = f^{-1} \circ ( \theta \otimes {\rm id}_V) \circ f$. It is immediate to check that $\theta_P$ is an $R$-linear isomorphism and is independent of the choice of $f$. 

Of course similar considerations can be done by replacing left modules by right modules.

\begin{definition}
 Let $F : \mathcal C \to {_R\mathcal M}$ be a functor. We say that $F$ is \textsl{twisted separable of standard type} if $F$ is twisted separable with separability datum  $(\Theta, \mathcal F, \theta_{-})$ satisfying the following condition:  there is a generating subclass $\mathcal F_0$ of $\mathcal F$ such for any object $P$ of $\mathcal F_0$ the object $F(P)$ is a free $R$-module, the free $R$-module $R$ is isomorphic to $F(P)$ for some object $P$ of $\mathcal F_0$, and there exists an algebra automorphism $\theta : R\to R$ such that $\Theta = \theta_*$ and $\theta_P$ is as in (\ref{thetaP}) for any object $P$ of $\mathcal F_0$.
\end{definition}

There is of course a completely similar definition for functors with values into right $R$-modules.

\begin{example}
 Let $A$ be a cosemisimple Hopf algebra. The twisted separable functor $\Omega_{A} :\yd_A^{A} \to \mathcal M_{A}$ in Example \ref{ex:ydsep} is of standard type, with the corresponding algebra automorphism $\theta$ defined by $\theta =\psi^{*2}*{\rm id}_A$, where $\psi$ is a modular functional of $A$. See \cite[Section 6]{bi22} for details.
\end{example}

\section{Twisted separability criterion for adjoint functors}\label{sec:tsfa}

In this section we give a characterization of twisted separability for adjoint functors, generalizing Rafael's theorem \cite[Theorem 1.2]{raf}. Our characterization is once a twisted separability datum as been prescribed, while of course, it would be desirable to have a necessary and sufficient condition without such a constraint. However we do not really see how to reach this, and we think that probably this a price to pay for the flexibility of twisted separability.  

\begin{theorem}\label{thm:raftw}
 Let $F : \mathcal C \to \mathcal D$ and $G : \mathcal D \to \mathcal C$ be some functors. Assume that $F$ is left adjoint to $G$, hence that $G$ is right adjoint to $F$, and denote $\eta : 1_{\mathcal C} \to GF$, $\varepsilon : FG \to 1_{\mathcal D}$ the respective unit and counit of the adjunction. 

\begin{enumerate}
 \item  Assume given an autoequivalence $\Theta : \mathcal D\to \mathcal D$ and a generating subclass $\mathcal F$ of objects of $\mathcal C$ together with for any object $P$ of $\mathcal F$, an isomorphism $\theta_P : F(P)\to \Theta F(P)$.
Then $F$ is twisted separable with twisted separability datum $(\Theta, \mathcal F, \theta_{-})$ if and only if there exists a natural transformation $\nu : G\Theta F \to 1_{\mathcal C}$ such that for any object $P$ in $\mathcal F$, one has $\nu_P \circ G(\theta_P)\circ \eta_P = {\rm id}_P$.

\item Assume given an autoequivalence $\Theta : \mathcal C\to \mathcal C$ and a generating subclass $\mathcal F$ of objects of $\mathcal D$ together with for any object $P$ of $\mathcal F$, an isomorphism $\theta_P : G(P)\to \Theta G(P)$. Fix a quasi-inverse equivalence $\Theta^{-1}$ to $\Theta$ together with natural isomorphisms $u :1_{\mathcal C}\to \Theta\Theta^{-1}$ and $v : \Theta^{-1}\Theta\to 1_{\mathcal C}$ forming an adjoint equivalence. 
Then $G$ is twisted separable with twisted separability datum $(\Theta, \mathcal F, \theta_{-})$ if and only if there exists a natural transformation $\xi : 1_{\mathcal D}  \to F\Theta^{-1} G$ such that for any object $P$ in $\mathcal F$, one has $\varepsilon_P \circ F(v_{G(P)} \circ \Theta^{-1}(\theta_P))\circ \xi_P = {\rm id}_P$.
\end{enumerate}
\end{theorem}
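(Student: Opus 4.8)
The plan is to prove both statements by directly constructing the required data from the given ones, exploiting the adjunction isomorphism to translate between the morphism set $\Hom_{\mathcal D}(F(-),\Theta F(-))$ (or $\Hom_{\mathcal C}(-,\Theta^{-1}G(-))$ in part (2)) appearing in Definition~\ref{def:tsf} and natural transformations between endofunctors. The key observation is that a twisted separability datum's natural morphism $\mathbf{M}_{-,-}$ is essentially the same data as a single natural transformation, once adjunction is applied. This is the standard move in Rafael's theorem, adapted to the twisted setting.

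For part (1), I would first recall the adjunction isomorphism $\Hom_{\mathcal D}(F(V),\Theta F(W)) \cong \Hom_{\mathcal C}(V, G\Theta F(W))$, natural in both variables. Given a natural transformation $\nu : G\Theta F \to 1_{\mathcal C}$, I would define $\mathbf{M}_{V,W}(f) = \nu_W \circ \widehat{f}$, where $\widehat{f} : V \to G\Theta F(W)$ is the adjunct of $f : F(V) \to \Theta F(W)$. Checking the naturality condition of Definition~\ref{def:tsf} reduces to naturality of the adjunction isomorphism together with naturality of $\nu$; this is a routine but careful diagram chase. The normalization condition $\mathbf{M}_{P,P}(\theta_P) = \id_P$ then translates exactly into $\nu_P \circ G(\theta_P) \circ \eta_P = \id_P$, since the adjunct of $\theta_P$ is precisely $G(\theta_P)\circ \eta_P$ by the triangle identities. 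Conversely, given $\mathbf{M}$, I would recover $\nu$ by setting $\nu_D = \mathbf{M}_{G\Theta^{-1}D, \,\text{?}}(\cdots)$ applied to a canonical counit-type morphism; more cleanly, I would define $\nu_W$ as $\mathbf{M}_{G\Theta F(W), W}$ evaluated at the adjunct of $\id_{G\Theta F(W)}$, and verify naturality of $\nu$ from naturality of $\mathbf{M}$.

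Part (2) is the genuinely new contribution and where the obstacle lies: $G$ is a \emph{right} adjoint, so $\Theta$ is an autoequivalence of the source category $\mathcal C = $ (domain of $F$, codomain of $G$), and one must twist on the correct side. Here the relevant morphism set is $\Hom_{\mathcal C}(G(V), \Theta G(W))$, and I would use the adjunction $\Hom_{\mathcal C}(G(V),\Theta G(W)) \cong \Hom_{\mathcal C}(\Theta^{-1}G(V), G(W)) \cong \Hom_{\mathcal D}(F\Theta^{-1}G(V), W)$, the first isomorphism coming from the chosen adjoint equivalence $(\Theta^{-1},u,v)$ and the second from the $F \dashv G$ adjunction. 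The factor $v_{G(P)} \circ \Theta^{-1}(\theta_P)$ in the statement is exactly the image of $\theta_P$ under this chain of identifications, reflecting the passage through $\Theta^{-1}$, and $\xi$ plays the role dual to $\nu$. Given $\xi : 1_{\mathcal D} \to F\Theta^{-1}G$, I would build $\mathbf{M}_{V,W}$ by sending $g : G(V) \to \Theta G(W)$ to the composite $\varepsilon_W \circ F\Theta^{-1}(\text{adjunct of }g) \circ \xi_V$ (suitably massaged through $u,v$).

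The main obstacle I anticipate is bookkeeping the coherence isomorphisms $u,v$ of the adjoint equivalence: one must ensure that the two adjunction isomorphisms involved are compatible so that the triangle identities produce the stated normalization $\varepsilon_P \circ F(v_{G(P)}\circ \Theta^{-1}(\theta_P))\circ \xi_P = \id_P$, rather than some variant differing by a coherence cell. Concretely, I expect the crux to be verifying that the adjunct under $F \dashv G$ of the morphism $v_{G(P)}\circ\Theta^{-1}(\theta_P) : \Theta^{-1}G F(P) \to GF(P)$, composed appropriately, recovers $\theta_P$ after undoing the $\Theta^{-1}$-twist via $u$; this is where the assumption that $(u,v)$ forms an \emph{adjoint} equivalence (so the triangle identities for $\Theta \dashv \Theta^{-1}$ hold on the nose) is essential and cannot be weakened to a bare equivalence. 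Once this coherence is pinned down, both directions of (2) follow by the same adjunction-transpose argument as in (1).
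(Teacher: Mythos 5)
Your proposal is correct and follows essentially the same route as the paper: in both parts you pass between $\mathbf{M}_{-,-}$ and the natural transformation ($\nu$ resp.\ $\xi$) via the adjunction transpose, and your formulas — $\nu_W = \mathbf{M}_{G\Theta F(W),W}(\varepsilon_{\Theta F(W)})$ (the transpose of the identity), $\mathbf{M}_{V,W}(f)=\nu_W\circ G(f)\circ \eta_V$, and in part (2) $\mathbf{M}_{V,W}(f)=\varepsilon_W\circ F(v_{G(W)}\circ\Theta^{-1}(f))\circ\xi_V$ with $\xi$ recovered by transposing the identity through $u$ — are exactly the paper's constructions. The coherence issue you single out as the crux (the triangle identities of the adjoint equivalence $u,v$) is precisely what the paper's explicit computation in the forward direction of (2) relies on, so your sketch matches the written proof step for step.
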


\begin{proof}
 (1) Assume first that $F : \mathcal C\to \mathcal D$ is twisted separable with twisted separability datum $(\Theta, \mathcal F, \theta_{-})$, so that we are given   a natural morphism $$\textbf{M}_{-,-} : \Hom_{\mathcal{D}}(F(-),\Theta F(-)) \to \Hom_{\mathcal{C}}(-,-)$$ such that for any object $P$ of $\mathcal{F}$, we have $\textbf{M}_{P,P}(\theta_P)= {\rm id}_P$. Define, for an object $X$ of $\mathcal C$, $\nu_X : G\Theta F(X)\to X$ by
$$\nu_X = \textbf{M}_{G\Theta F(X),X}(\varepsilon_{\Theta F(X)})$$
For a morphism $\beta : X\to Y$, we have, using the naturality of $\textbf{M}_{-,-}$ and of $\varepsilon$,
\begin{align*}
 \beta \circ \nu_X & = \beta \circ \textbf{M}_{G\Theta F(X),X}(\varepsilon_{\Theta F(X)}) = 
 \textbf{M}_{G\Theta F(X),Y}(\Theta F(\beta)\circ \varepsilon_{\Theta F(X)})
 \\
& = \textbf{M}_{G\Theta F(X),Y}(\varepsilon_{\Theta F(Y)} \circ FG\Theta F(\beta)) = \textbf{M}_{G\Theta F(Y),Y}(\varepsilon_{\Theta F(Y)}) \circ G\Theta F(\beta) \\
& = \nu_Y \circ G \Theta F(\beta)
\end{align*}
This shows that $\nu$ above indeed defines a natural transformation $G\Theta F \to 1_{\mathcal C}$. For an object $P$ of $\mathcal F$, we have, using the naturality of $\textbf{M}_{-,-}$, of $\varepsilon$, the adjunction property and our assumption,
\begin{align*}
 \nu_P\circ G(\theta_P)\circ \eta_P & = \textbf{M}_{G\Theta F(P),P}(\varepsilon_{\Theta F(P)}) \circ G(\theta_P)\circ  \eta_P =  \textbf{M}_{P,P}(\varepsilon_{\Theta F(P)} \circ FG(\theta_P)\circ  F(\eta_P))\\ 
& = \textbf{M}_{P,P}(\theta_P \circ \varepsilon_{F(P)} \circ  F(\eta_P)) = M(\theta_P)= {\rm id}_P
\end{align*}
This shows that $\nu$ satisfies the announced condition.

Conversely, start with natural transformation $\nu : G\Theta F \to 1_{\mathcal C}$ such that for any object $P$ in $\mathcal F$, one has $\nu_P \circ G(\theta_P)\circ \eta_P = {\rm id}_P$, and define for objects $V$, $W$ of $\mathcal C$,
\begin{align*}  
\textbf{M}_{V,W} :  \Hom_{\mathcal{D}}(F(V),\Theta F(W)) &\longrightarrow \Hom_{\mathcal{C}}(V,W) \\
f &\longmapsto \nu_W \circ G(f) \circ \eta_V
\end{align*}
Consider morphisms $\alpha : V'\to V$, $\beta : W \to W'$ in $\mathcal{C}$ and a morphism $f : F(V) \to \Theta F(W)$ in $\mathcal{D}$. We have, using the naturality of $\eta$ and $\nu$,
\begin{align*}
\beta \circ {\bf M}_{V,W}(f) \circ \alpha & = \beta \circ \nu_W \circ G(f) \circ \eta_V \circ \alpha
= \nu_{W'} \circ G\Theta F(\beta) \circ G(f) \circ  GF(\alpha)\circ \eta_{V'}\\
& = \nu_{W'} \circ G(\Theta F(\beta) \circ f \circ  F(\alpha))\circ \eta_{V'} ={\bf M}_{V',W'}(\Theta F(\beta) \circ f \circ F(\alpha)) 
\end{align*}
and this shows that $\textbf{M}_{-,-}$ is natural. We also have, by our assumption, $\textbf{M}_{P,P}(\theta_P) = \nu_P \circ G(\theta_p) \circ \eta_P={\rm id}_P$, and we conclude that $F$ is twisted separable with  twisted separability datum $(\Theta, \mathcal F, \theta_{-})$.

(2) Assume that $G : \mathcal D\to \mathcal C$ is twisted separable with twisted separability datum $(\Theta, \mathcal F, \theta_{-})$, so that we are given   a natural morphism $$\textbf{M}_{-,-} : \Hom_{\mathcal{C}}(G(-),\Theta G(-)) \to \Hom_{\mathcal{D}}(-,-)$$ such that for any object $P$ of $\mathcal{F}$, we have $\textbf{M}_{P,P}(\theta_P)= {\rm id}_P$. Define, for an object $X$ of $\mathcal C$, $\xi_X : X\to F\Theta^{-1} G(X)$ by
$$\xi_X = \textbf{M}_{X,F\Theta^{-1} G(X)}\left(\Theta(\eta_{\Theta^{-1} G(X)})\circ u_{G(X)}\right)$$
For a morphism $\beta : X\to Y$, we have, using the naturality of $\textbf{M}_{-,-}$, of $\eta$ and $u$,
\begin{align*}
 F\Theta^{-1}G(\beta) \circ \xi_X & = F\Theta^{-1}G(\beta) \circ  \textbf{M}_{X,F\Theta^{-1} G(X)}\left(\Theta(\eta_{\Theta^{-1} G(X)})\circ u_{G(X)}\right) \\
&= \textbf{M}_{X,F\Theta^{-1} G(Y)}\left(\Theta GF\Theta^{-1}G(\beta) \circ \Theta(\eta_{\Theta^{-1} G(X)})\circ u_{G(X)}\right)
 \\ 
& = \textbf{M}_{X,F\Theta^{-1} G(Y)}\left(\Theta(\eta_{\Theta^{-1}G(Y)}) \circ \Theta\Theta^{-1}G(\beta)) \circ u_{G(X)}\right)
 \\  &=\textbf{M}_{X,F\Theta^{-1} G(Y)}\left(\Theta(\eta_{\Theta^{-1}G(Y)}) \circ u_{G(Y)} \circ G(\beta) \right) \\
& = \textbf{M}_{Y,F\Theta^{-1} G(Y)}\left(\Theta(\eta_{\Theta^{-1}G(Y)}) \circ u_{G(Y)} \right) \circ \beta \\
& = \xi_Y \circ \beta
\end{align*}
and this shows that $\xi$ is a natural transformation. We have, for $P \in \mathcal C$, 
\begin{align*}
\varepsilon_P &\circ F(v_{G(P)} \circ \Theta^{-1}(\theta_P))\circ \xi_P \\ 
&=  \varepsilon_P \circ F(v_{G(P)} \circ \Theta^{-1}(\theta_P))\circ  \textbf{M}_{P,F\Theta^{-1} G(P)}\left(\Theta(\eta_{\Theta^{-1} G(P)})\circ u_{G(P)}\right)\\
& = \textbf{M}_{P,P}\left( \Theta G(\varepsilon_P) \circ \Theta GF(v_{G(P)}) \circ \Theta G F\Theta^{-1}(\theta_P)\circ  \Theta(\eta_{\Theta^{-1} G(P)})\circ u_{G(P)}\right) \\
& =\textbf{M}_{P,P}\left( \Theta G(\varepsilon_P) \circ \Theta GF(v_{G(P)}) \circ \Theta(\eta_{\Theta^{-1} \Theta G(P)}) \circ \Theta \Theta^{-1}(\theta_P)  \circ u_{G(P)}\right) \\
&  =\textbf{M}_{P,P}\left( \Theta G(\varepsilon_P)   \circ \Theta(\eta_{G(P)}) \circ  \Theta (v_{G(P)}) \circ u_{\Theta G(P)} \circ \theta_P\right) \\
& =\textbf{M}_{P,P}(\theta_P) ={\rm id}_P
\end{align*}
where we have used naturality of $\textbf{M}_{-,-}$ and the adjoint properties.

Conversely, assume we are given a natural transformation $\xi : 1_{\mathcal D}  \to F\Theta^{-1} G$ such that for any object $P$ in $\mathcal F$, one has $\varepsilon_P \circ F(v_{G(P)} \circ \Theta^{-1}(\theta_P))\circ \xi_P = {\rm id}_P$.
Define for objects $X$, $Y$ of $\mathcal D$,
\begin{align*}  
\textbf{M}_{X,Y} :  \Hom_{\mathcal{C}}(G(X),\Theta G(Y)) &\longrightarrow \Hom_{\mathcal{D}}(X,Y) \\
f &\longmapsto \varepsilon_Y \circ F\left(v_{G(Y)} \circ\Theta^{-1}(f)\right) \circ \xi_X
\end{align*}
We have  ${\bf M}_{P,P}(\theta_P)={\rm id}_P$ for any object $P$ in $\mathcal F$, by assumption. 
Consider morphisms $\alpha : X'\to X$, $\beta : Y \to Y'$ in $\mathcal{D}$ and a morphism $f : G(X) \to \Theta G(Y)$ in $\mathcal{C}$. We have, using the naturality of $\varepsilon$, $\xi$ and $v$,
\begin{align*}
\beta \circ {\bf M}_{X,Y}(f) \circ \alpha & = \beta \circ \varepsilon_Y \circ F\left(v_{G(Y)}\circ\Theta^{-1}(f)\right) \circ \xi_X \circ \alpha \\
&= \varepsilon_{Y'} \circ F\left(G(\beta) \circ v_{G(Y)}\circ \Theta^{-1}(f) \circ  \Theta^{-1}G(\alpha)\right) \circ \xi_{X'}\\
&= \varepsilon_{Y'} \circ F\left(v_{G(Y')} \circ \Theta^{-1}\Theta G(\beta) \circ \Theta^{-1}(f) \circ  \Theta^{-1}G(\alpha)\right) \circ \xi_{X'}\\
& = \varepsilon_{Y'} \circ F\left(v_{G(Y')} \circ \Theta^{-1}\left(\Theta G(\beta) \circ (f) \circ  G(\alpha)\right)\right) \circ \xi_{X'}\\
&=  {\bf M}_{X',Y'}\left(G\Theta(\beta)\circ f \circ G(\alpha)\right) 
\end{align*}
This shows that $\textbf{M}_{-,-}$ is natural, and we conclude that $G$ is twisted separable with  twisted separability datum $(\Theta, \mathcal F, \theta_{-})$.
\end{proof}

\begin{remark}
 The above characterization of twisted separability is more technical for right adjoint functors than for left adjoint functors, but this is not surprising because of the asymmetry in the definition of twisted separability.
\end{remark}

\section{Application to Hopf-Galois objects}\label{sec:apphg}

In this section we apply the twisted separability criterion of Section \ref{sec:tsfa} to Hopf-Galois objects, which will lead to the proof of Theorem \ref{thm:cdhgtt} in the introduction.

\subsection{The canonical functor and its separability} We begin by associating a functor to any left comodule algebra, as follows.

\begin{definition}
 Let $A$ be a Hopf algebra and let $R$ be a left $A$-comodule algebra. The \textsl{canonical functor associated to $R$} is the functor
\begin{align*}
 F_R : {_A \mathcal M} &\longrightarrow {_R\mathcal M}_R \\
V &\longmapsto V \odot R
\end{align*}
where $V\odot R$ is $V\otimes R$ as a vector space, and has $R$-bimodule structure given by
$$x\cdot (v\otimes y)\cdot z = x_{(-1)}\cdot v \otimes x_{(0)}yz, \quad x,y,z \in R, \ v\in V$$ 
\end{definition}

We are interested in the separability or twisted separability of the above canonical functor. In general it is unclear to us what can be said about this question, and we will specialize to the case of Hopf-Galois objects. Recall that if $A$ is a Hopf algebra, a left $A$-Galois object is a (non-zero) left $A$-comodule algebra such that the canonical map
\begin{align*}
 {\rm can} : R\otimes R & \longrightarrow A \otimes R \\
x\otimes y &\longmapsto x_{(-1)} \otimes x_{(0)}y
\end{align*}
 is bijective. See \cite{scsurv} for a general overview of the theory of Hopf-Galois objects, which in particular serve to construct monoidal functors between categories of comodules.  

The map $ \kappa : A \to R\otimes R$ defined by $\kappa (a) = {\rm can}^{-1}(a\otimes 1)$ defines then an algebra map $ A \to R\otimes R^{\rm op}$, for which we use a Sweedler type notation $\kappa(a)=a^{[1]} \otimes a^{[2]}$. By construction of $\kappa$, the following identities hold, for $x\in R$ and $a\in A$:
\begin{align}
 \label{eq:kappa} x_{(-1)}^{\quad \ [1]} \otimes x_{(-1)}^{\quad \ [2]}x_{(0)} = x\otimes 1, \quad 
a^{[1]}_{\ (-1)} \otimes a^{[1]}_{\ (0)}a^{[2]} = a\otimes 1
\end{align}
The above algebra map defines in the usual way the restriction functor
$$\kappa_* : {_A\mathcal M} \to {_R\mathcal M}_R$$  
that associates to an $R$-bimodule $M$ the left $A$-module $\kappa_*(M)$ having $M$ as underlying vector space and left $A$-action, called the Miyashita-Ulbrich action, defined by $a\cdot x = a^{[1]}.x.a^{[2]}$ 

The following result is well-known.

\begin{proposition}\label{prop:adjcan}
 Let $A$ be a Hopf algebra and let $R$ be a left $A$-Galois object. Then $(F_R, \kappa_*)$ form a pair of adjoint functors, the unit and counit of the adjonction being given by the following morphisms, for $V$ a left $A$-module and $M$ an $R$-bimodule
 \begin{align*}
  \eta_V : V &\longmapsto \kappa_*(V\odot R) \quad \quad \varepsilon_M : \kappa_*(M)\odot R \to M \\
v &\longmapsto v\otimes 1 \quad \quad \quad \quad  \quad \quad\quad \quad m\otimes x \longmapsto m\cdot x 
 \end{align*}
\end{proposition}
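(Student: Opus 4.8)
The plan is to verify Proposition \ref{prop:adjcan} directly from the definitions of the adjunction unit and counit, checking that the proposed maps $\eta_V$ and $\varepsilon_M$ are morphisms in the appropriate categories, that they are natural in $V$ and $M$ respectively, and that they satisfy the triangle identities of an adjunction. The overall structure is routine once one commits to computing with the explicit Sweedler-type notation $\kappa(a) = a^{[1]} \otimes a^{[2]}$ and the two identities in (\ref{eq:kappa}).

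First I would check that $\eta_V : V \to \kappa_*(F_R(V)) = \kappa_*(V \odot R)$ is left $A$-linear, where the $A$-action on the target is the Miyashita-Ulbrich action. Concretely, one must verify that $a \cdot (v \otimes 1) = (a\cdot v) \otimes 1$ in $\kappa_*(V \odot R)$. Unwinding the left-hand side using the Miyashita-Ulbrich action $a \cdot (v\otimes 1) = a^{[1]} \cdot (v\otimes 1) \cdot a^{[2]}$ together with the bimodule structure $x\cdot(v\otimes y)\cdot z = x_{(-1)}\cdot v \otimes x_{(0)}yz$, this becomes $a^{[1]}_{\ (-1)}\cdot v \otimes a^{[1]}_{\ (0)} a^{[2]}$, and the second identity in (\ref{eq:kappa}) collapses this precisely to $(a\cdot v)\otimes 1$. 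Next I would check that $\varepsilon_M : F_R(\kappa_*(M)) = \kappa_*(M)\odot R \to M$, $m \otimes x \mapsto m\cdot x$, is a morphism of $R$-bimodules; here one uses the definition of the bimodule structure on the source and the Miyashita-Ulbrich action appearing inside $\kappa_*(M)$, and the first identity in (\ref{eq:kappa}) is the key simplification that makes the left $R$-linearity work out.

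With $\eta$ and $\varepsilon$ established as natural transformations $1_{{_A\mathcal M}} \to \kappa_* F_R$ and $F_R \kappa_* \to 1_{{_R\mathcal M}_R}$, the final step is to confirm the two triangle identities, namely $\varepsilon_{F_R(V)} \circ F_R(\eta_V) = \mathrm{id}_{F_R(V)}$ and $\kappa_*(\varepsilon_M) \circ \eta_{\kappa_*(M)} = \mathrm{id}_{\kappa_*(M)}$. The first reduces, on an element $v \otimes x$, to tracking $v \otimes x \mapsto (v\otimes 1)\otimes x \mapsto (v\otimes 1)\cdot x = v \otimes x$, which is immediate; the second reduces to $m \mapsto m \otimes 1 \mapsto m \cdot 1 = m$, also immediate. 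Naturality of both transformations is a straightforward diagram chase using only the definitions of $F_R$ on morphisms and the fact that $\kappa_*$ changes only the module structure and not the underlying linear maps.

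The main obstacle, such as it is, lies not in the triangle identities (which are essentially formal) but in the well-definedness checks of the first paragraph: one must be careful that the two relations in (\ref{eq:kappa}) are deployed in the correct direction, since the map $\kappa$ is an anti-algebra morphism into $R \otimes R^{\mathrm{op}}$ and the Miyashita-Ulbrich action mixes left and right $R$-module structures. The cleanest route is to treat the two identities in (\ref{eq:kappa}) as the precise statements that $\eta$ and $\varepsilon$ respectively land in the right categories, so that $A$-linearity of $\eta$ and left $R$-linearity of $\varepsilon$ each become a single application of one identity. Since the statement is described as well-known, I would keep the verification brief and refer the reader to the structure of Hopf-Galois theory for the provenance of (\ref{eq:kappa}).
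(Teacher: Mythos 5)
Your proposal is correct and follows essentially the same route as the paper, which simply observes that $\eta_V$ and $\varepsilon_M$ being morphisms in ${_A\mathcal M}$ and ${_R\mathcal M}_R$ follows directly from the two identities in (\ref{eq:kappa}) and that the triangle identities are immediate; your computations deploy the two identities in exactly the right places (the second for $A$-linearity of $\eta_V$, the first for left $R$-linearity of $\varepsilon_M$). One harmless slip in your closing remark: $\kappa$ is an algebra morphism $A \to R \otimes R^{\mathrm{op}}$, not an anti-algebra morphism, but this plays no role in your argument since only the identities (\ref{eq:kappa}) are used.
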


\begin{proof}
 That $\eta_V$ and $\varepsilon_M$ are morphisms respectively in ${_A \mathcal M}$ and ${_R\mathcal M}_R$ follow directly from the identities (\ref{eq:kappa}), and the adjunction identities are immediate to check.
\end{proof}

Using Rafael's theorem, We thus can characterize when the canonical functor of a left Galois object is separable, as follows

\begin{theorem}\label{thm:sephg}
 Let $A$ be a Hopf algebra and let $R$ be a left $A$-Galois object. Then the canonical functor $F_R : {_A \mathcal M} \rightarrow {_R\mathcal M}_R$ is separable if and only if there exists a linear map $\psi : R\to k$ such that $\psi(1)=1$ and $\psi$ is $A$-linear with respect to the Miyashita-Ulbrich action on $R$, i.e. $\psi ( a^{[1]}.x.a^{[2]})= \varepsilon(a)\psi(x)$, for any $x\in R$ and $a\in A$.
\end{theorem}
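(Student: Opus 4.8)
The plan is to apply Theorem \ref{thm:raftw}(1) to the adjoint pair $(F_R, \kappa_*)$ furnished by Proposition \ref{prop:adjcan}, in the untwisted case where $\Theta = \id$, $\mathcal F = \mathrm{ob}({_A \mathcal M})$, and each $\theta_P = \id_{F_R(P)}$. With these choices, ordinary separability of $F_R$ is exactly twisted separability with this trivial datum, and the criterion in part (1) reduces to the existence of a natural transformation $\nu : \kappa_* F_R \to 1_{{_A \mathcal M}}$ satisfying $\nu_P \circ \eta_P = \id_P$ for every left $A$-module $P$. So the whole statement becomes a translation exercise: characterize such a $\nu$ in terms of a single linear map $\psi : R \to k$.

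\medskip

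\noindent
First I would unwind the composite $\kappa_* F_R$. For a left $A$-module $V$, the object $\kappa_* F_R(V)$ has underlying space $V \otimes R$ and carries the Miyashita-Ulbrich left $A$-action $a \cdot (v \otimes y) = a^{[1]} \cdot (v \otimes y) \cdot a^{[2]}$, which by the $R$-bimodule structure of $V \odot R$ equals $a^{[1]}_{\ (-1)} \cdot v \otimes a^{[1]}_{\ (0)} y\, a^{[2]}$. The key observation is that a natural transformation $\nu : \kappa_* F_R \to 1$ is determined by its value on the free module $A$ (or by naturality with respect to the maps $a \mapsto a \cdot v$ out of $A$), and evaluating $\nu$ against the trivial module $k$ produces a linear functional; conversely any $A$-linear $\psi : \kappa_*(R) \to k$ (where $\kappa_*(R)$ means $R$ with the Miyashita-Ulbrich action, the $V = k$ case) should be assembled into a natural $\nu$ via $\nu_V(v \otimes y) = \psi(y)\, v$ up to the module action. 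So the guess is $\nu_V(v \otimes y) = \psi(?)\cdot v$ with $\psi$ forced to be $A$-linear for the Miyashita-Ulbrich action, i.e. $\psi(a^{[1]} \cdot x \cdot a^{[2]}) = \varepsilon(a)\psi(x)$, and the normalization $\nu_V \circ \eta_V = \id$ forcing $\psi(1) = 1$ since $\eta_V(v) = v \otimes 1$.

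\medskip

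\noindent
Concretely, in one direction I would assume $F_R$ separable, take the associated $\nu$, and set $\psi := \varepsilon_{V=k}$-type evaluation, namely restrict $\nu_k : k \otimes R = R \to k$ and define $\psi(y) = \nu_k(1 \otimes y)$. Then $\psi(1) = \nu_k(1 \otimes 1) = \nu_k(\eta_k(1)) = 1$ gives the normalization, and $A$-linearity of $\nu_k$ with respect to the Miyashita-Ulbrich action gives exactly $\psi(a^{[1]} \cdot y \cdot a^{[2]}) = \varepsilon(a)\psi(y)$ after noting that the $A$-action on the trivial module $k$ is through $\varepsilon$. In the other direction, given such a $\psi$, I would define $\nu_V : V \otimes R \to V$ by $\nu_V(v \otimes y) = \psi(y)\, v$ and verify that naturality in $V$ is automatic (it is built from $\psi$ tensored with the identity), that $\nu_V$ is $A$-linear for the Miyashita-Ulbrich action on the source using precisely the $\varepsilon$-twisted $A$-linearity of $\psi$, and that $\nu_V \circ \eta_V = \id_V$ because $\psi(1) = 1$.

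\medskip

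\noindent
The main obstacle I anticipate is checking $A$-linearity of $\nu_V$ in the ``if'' direction: one must confirm that the prescribed $\psi$-property, which is stated with the single variable $x$ and a single algebra element $a$, genuinely forces $\nu_V(a \cdot (v \otimes y)) = a \cdot \nu_V(v \otimes y) = \varepsilon(a)\, \nu_V(v \otimes y)$ for a general module $V$. This requires expanding the Miyashita-Ulbrich action $a \cdot (v \otimes y) = a^{[1]}_{\ (-1)} \cdot v \otimes a^{[1]}_{\ (0)} y\, a^{[2]}$ and separating the comodule tensor-leg acting on $v$ from the algebra legs acting on $y$; the identities (\ref{eq:kappa}) governing $\kappa$ together with the coassociativity of the $A$-coaction are what make the ``$v$-leg'' collapse correctly against $\varepsilon$. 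This is the step where the Galois hypothesis (through the well-definedness and properties of $\kappa$) is really used, so I would handle it carefully, while treating naturality and the normalization as routine.
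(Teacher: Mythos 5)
Your proposal is correct and takes essentially the same route as the paper: Rafael's criterion (equivalently, Theorem \ref{thm:raftw}(1) with the trivial datum) applied to the adjunction $(F_R,\kappa_*)$ of Proposition \ref{prop:adjcan}, extracting $\psi=\nu_k$ in one direction and defining $\nu_V=\mathrm{id}_V\otimes\psi$ in the other. The ``collapse'' identity you anticipate for the $A$-linearity check is exactly $a^{[1]}_{\ (-1)} \otimes a^{[1]}_{\ (0)} \otimes a^{[2]} = a_{(1)} \otimes a_{(2)}^{\ \ [1]} \otimes a_{(2)}^{\ \ [2]}$, i.e.\ the paper's identity (\ref{eq:idkappa3}) (quoted from Schneider), which does follow, as you indicate, from the identities (\ref{eq:kappa}) together with coassociativity and the bijectivity of the canonical map.
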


\begin{proof}
 By Rafael's theorem and Proposition \ref{prop:adjcan}, the separability of $F_R$ is equivalent to the existence of a natural transformation
$$\nu : \kappa_*(\bullet \odot R) \to 1_{{_A\mathcal M}}$$
such that $\nu_V\circ\eta_V= {\rm id}_V$ for any left $A$-module $V$. If such a $\nu$ exists, put $\psi =\nu_k$ for the trivial object $k$. It is immediate that $\psi$ satisfies the above conditions. Starting with an $A$-linaear map $\psi$, define $\nu_V = {\rm id}_V\otimes \psi$. We have, for $a\in A$, $v\in V$ and $x\in R$,
\begin{align*}
 \nu_V(a\cdot(v\otimes x)) & = \nu_V(a^{[1]}. (v\otimes x). a^{[2]}) 
 =  \nu_V\left(a^{[1]}_{\ (-1)}.v \otimes a^{[1]}_{\ (0)} x a^{[2]}\right) \\
& = \nu_V\left( a_{(1)}.v \otimes a_{(2)}^{\ \ [1]} x a_{(2)}^{\ \ [2]} \right) =  \psi( a_{(2)}^{\ \ [1]} x a_{(2)}^{\ \ [2]}) a_{(1)}.v \\
& = \psi(x)a.v= a.\nu_V(v\otimes x)
\end{align*}
where we have used the identity \cite[Remark 3.4]{schneider}
\begin{align}\label{eq:idkappa3}
 a^{[1]}_{\ (-1)} \otimes a^{[1]}_{\ (0)} \otimes a^{[2]} = a_{(1)} \otimes a_{(2)}^{\ \ [1]} \otimes a_{(2)}^{\ \ [2]}
\end{align}
Hence $\nu_V$ is $A$-linear, and since $\nu_V\circ \eta_V = {\rm id}_V$ and $\nu$ defines a natural transformation, we conclude that $F_R$ is separable.
\end{proof}

\subsection{Cogroupoids}
We now want to generalize Theorem \ref{thm:sephg} to  the twisted separable context. While its proof essentially only needed   the identities (\ref{eq:kappa}) and (\ref{eq:idkappa3}), the proof of our generalization will need some more involved ones, and it will be convenient to move to the context of cogroupoids \cite{bic14}, that we recall now.  

First recall that a cocategory (of algebras over $k$) $\coc$ consists of:

\noindent
$\bullet$ a set of objects ${\rm ob}(\coc)$;

\noindent
$\bullet$ For any $X,Y \in {\rm ob}(\coc)$, an algebra 
$\coc(X,Y)$; 

\noindent
$\bullet$ For any $X,Y,Z \in {\rm ob}(\coc)$, algebra morphisms
$$\Delta_{X,Y}^Z : \coc(X,Y) \longrightarrow \coc(X,Z) \otimes \coc(Z,Y)
\quad {\rm and} \quad \varepsilon_X : \coc(X,X) \longrightarrow \C$$
such that some natural coassociativity and counit diagrams, dual to the usual associativity and unit diagrams in a category, commute. In particular $\coc(X,X)$ is a bialgebra for any object $X$.

A cogroupoid  $\coc$ consists of a cocategory $\coc$ together
with, for any $X,Y \in {\rm ob}(\coc)$, linear maps
$$S_{X,Y} : \coc(X,Y) \longrightarrow \coc(Y,X)$$
such that natural diagrams (dual to the invertibility diagrams in a groupoid) commute.
In particular $\coc(X,X)$ is a Hopf algebra for any object $X$.
A cogroupoid is said to be connected if for any $X,Y \in {\rm ob}(\coc)$, the algebra
$\mathcal C(X,Y)$ is non-zero.

Let us now write down the axioms using Sweedler's notation for cocategories and cogroupoids.
Let $\coc$ be a cocategory. For $a^{XY} \in \coc(X,Y)$, we write 
$$\Delta_{X,Y}^Z(a^{XY})= a_{(1)}^{XZ} \otimes a_{(2)}^{ZY}$$
The cocategory axioms are
$$(\Delta_{X,Z}^T \otimes 1)\circ \Delta_{X,Y}^Z(a^{XY}) = 
a_{(1)}^{XT} \otimes a_{(2)}^{TZ} \otimes a_{(3)}^{ZY}= 
(1 \otimes \Delta_{T,Y}^Z)\circ \Delta_{X,Y}^T(a^{XY})
$$
$$\varepsilon_X(a_{(1)}^{XX}) a_{(2)}^{XY} = a^{XY} =
\varepsilon_Y(a_{(2)}^{YY})  a_{(1)}^{XY}$$
and the additional cogroupoid axioms are
$$S_{X,Y}(a_{(1)}^{XY}) a_{(2)}^{YX} = \varepsilon_X(a^{XX})1 =
a_{(1)}^{XY} S_{Y,X}(a_{(2)}^{YX})
$$

It is known \cite[Theorem 2.11]{bic14} that if $A$ is a Hopf algebra and $R$ is a left $A$-Galois object, there exists a connected cogroupoid $\coc$ with two objects $X$, $Y$ such that $A =\coc(X,X)$ and $R=\coc(X,Y)$. In this setting, we have 
for $a=a^{XY}\in \coc(X,Y)=R$, $$\kappa(a) = a^{[1]} \otimes a^{[2]} =   a_{(1)}^{X,Y} \otimes S_{Y,X}(a_{(2)}^{YX}),$$ and 
Identities (\ref{eq:kappa}) are, for $a^{XY} \in \coc(X,Y)=R$, $a^{XX}\in \coc(X,X)=A$,  
$$a_{(1)}^{XY}\otimes S_{Y,X}(a_{(2)}^{YX})a^{XY}_{(3)} = a^{XY}\otimes 1, \quad a_{(1)}^{XX}\otimes a_{(2)}^{XY}S_{Y,X}(a^{YX}_{(3)}) = a^{XX}\otimes 1$$
while  (\ref{eq:idkappa3}) is, for $a= a^{XY} \in R$, 
$$
a^{[1]}_{\ (-1)} \otimes a^{[1]}_{\ (0)} \otimes a^{[2]} = a_{(1)}^{XX} \otimes a_{(2)}^{XY} \otimes S_{Y,X}(a_{(3)}^{YX}) = 
a_{(1)} \otimes a_{(2)}^{\ \ [1]} \otimes a_{(2)}^{\ \ [2]}$$
We also record the following useful facts \cite[Proposition 2.13]{bic14}: for $X,Y,Z\in {\rm ob}(\coc)$, the map $S_{X,Y} : \coc(X,Y)\to \coc(Y,X)$ is an anti-algebra morphism, and for $a^{YX} \in \coc(X,Y)$, we have 
\begin{align}\Delta_{XY}^Z(S_{Y,X}(a^{YX}))= S_{Z,X}(a_{(2)}^{ZX})\otimes S_{Y,Z}(a_{(1)}^{YZ})\label{eq:anticoSYX}
 \end{align}

\subsection{Twisted traces and twisted separability for the canonical functor} We begin this subsection with the following classical definition, already evoked in the introduction.

\begin{definition}
Let $R$ be an algebra. A linear map $\psi : R \to k$ is said to be a \textsl{twisted trace} if there exists an algebra automorphism $\sigma : R \to R$ such that $\psi(xy)=\psi(y\sigma(x))$ for any $x,y\in R$. In that case $\psi$ is said to be a twisted trace with respect to $\sigma$.  A trace is a twisted trace with respect to the identity. A twisted trace $\psi$ is said to be \textsl{unital} if $\psi(1)=1$.
\end{definition}

We also will need the following definition.

\begin{definition}
 Let $A$ be a Hopf algebra and let $V,W$ be left $A$-comodules. A linear map $f: V\to W$ is said to be \textsl{semi-$A$-colinear} if there exists a Hopf algebra automorphism $\rho : A\to A$ such that
$\rho(v_{(-1)}) \otimes f(v_{(0)}) = f(v)_{(-1)} \otimes f(v)_{(0)}$ for any $v  \in V$. In that case we say that $f$ is left semi-$A$-colinear with respect to $\rho$.
\end{definition}

\begin{example}\label{ex:semico}
 Let $\coc$ be a cogroupoid and let $X,Y\in {\rm ob}(\coc)$. It follows from (\ref{eq:anticoSYX}) that  $S_{Y,X}S_{X,Y} : \coc(X,Y) \to \coc(X,Y)$ is left semi-colinear with respect to $S_{XX}^2$.
\end{example}

Twisted traces are related with a twisted version of Miyashita-Ulbrich invariance in Theorem \ref{thm:sephg}, as follows.

\begin{lemma}\label{lem:tt-my}
 Let $A$ be a Hopf algebra with bijective antipode, let $R$ be a left Hopf-Galois object over $A$ and let $\psi : R\to k$ be a linear map. The following assertions are equivalent:
\begin{enumerate}
 \item $\psi$ is a twisted trace, with respect to an automorphism $\sigma : R\to R$;
\item There exists and algebra automorphism $\theta : R \to R$ such that for any $a\in A$ and $x\in R$, we have $\psi(\theta(a^{[1]})xa^{[2]})= \varepsilon(a)\psi(x)$. 
\end{enumerate}
In this case, if  $\mathcal C$ be a connected groupoid with objects $X,Y$ such that $A=\mathcal C(X,X)$ and $R = \mathcal C(X,Y)$, the automorphisms $\sigma$ and $\theta$ are related by $\sigma \theta =  S_{Y,X}S_{X,Y}$, and $\sigma$ is semi-$A$-colinear if and only $\theta$ is.
\end{lemma}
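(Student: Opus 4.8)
The plan is to work inside the connected cogroupoid $\coc$ with objects $X,Y$ such that $A=\coc(X,X)$ and $R=\coc(X,Y)$, and to set $\tau:=S_{Y,X}S_{X,Y}$, which by connectedness and bijectivity of the antipode is an algebra automorphism of $R$ (a composite of two bijective anti-algebra morphisms) and which is semi-$A$-colinear with respect to $S_{X,X}^2$ by Example \ref{ex:semico}. The whole statement is then organized around the single relation $\sigma\theta=\tau$: I would define $\theta$ from $\sigma$ by $\theta=\sigma^{-1}\tau$ and $\sigma$ from $\theta$ by $\sigma=\tau\theta^{-1}$, so that in each direction the produced map is automatically an algebra automorphism. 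The computations rest on two identities. The first is the translation-map identity $xy=x_{(-1)}^{[1]}\,y\,x_{(-1)}^{[2]}x_{(0)}$, obtained by applying the linear map $p\otimes q\mapsto pyq$ to the first relation in (\ref{eq:kappa}). The second is the key identity
\begin{equation*}
a^{[2]}\tau(a^{[1]})=\varepsilon(a)1,\qquad a\in A,\tag{$\dagger$}
\end{equation*}
which I would prove in $\coc$ by writing $a^{[2]}\tau(a^{[1]})=S_{Y,X}(a_{(2)}^{YX})\,S_{Y,X}(S_{X,Y}(a_{(1)}^{XY}))=S_{Y,X}\big(S_{X,Y}(a_{(1)}^{XY})\,a_{(2)}^{YX}\big)$, using that $S_{Y,X}$ is an anti-algebra morphism, and then collapsing the inner expression by the cogroupoid antipode axiom $S_{X,Y}(a_{(1)}^{XY})a_{(2)}^{YX}=\varepsilon_X(a^{XX})1$.

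For the implication $(1)\Rightarrow(2)$, with $\theta=\sigma^{-1}\tau$, I would simply compute, for $a\in A$ and $w\in R$,
\[
\psi\big(\theta(a^{[1]})\,w\,a^{[2]}\big)=\psi\big(w\,a^{[2]}\,\sigma\theta(a^{[1]})\big)=\psi\big(w\,a^{[2]}\tau(a^{[1]})\big)=\varepsilon(a)\psi(w),
\]
where the first equality is the twisted trace property applied with $x=\theta(a^{[1]})$ and $y=wa^{[2]}$, the second is $\sigma\theta=\tau$, and the last is ($\dagger$). This is the clean direction.

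The implication $(2)\Rightarrow(1)$ is where the real work lies. Starting from (2) and ($\dagger$), I would first derive, for all $a\in A$ and $w\in R$,
\begin{equation*}
\psi\big(\theta(a^{[1]})\,w\,a^{[2]}\big)=\psi\big(w\,a^{[2]}\,\sigma\theta(a^{[1]})\big),\tag{$\heartsuit$}
\end{equation*}
the left side being $\varepsilon(a)\psi(w)$ by (2) and the right side being $\varepsilon(a)\psi(w)$ by ($\dagger$) together with $\sigma\theta=\tau$. Writing $L,\rho:R\otimes R\to k$ for the functionals $L(p\otimes q)=\psi(pq)$ and $\rho(p\otimes q)=\psi(q\sigma(p))$, the identity ($\heartsuit$) says precisely that $L$ and $\rho$ agree on every tensor of the form $\theta(a^{[1]})\otimes wa^{[2]}$, and that $\psi$ is a twisted trace with respect to $\sigma$ is exactly the assertion $L=\rho$. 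It therefore suffices to show that these tensors span $R\otimes R$; since $\theta$ is bijective, this amounts to surjectivity of the map $A\otimes R\to R\otimes R$, $a\otimes w\mapsto a^{[1]}\otimes wa^{[2]}$. I expect this spanning statement to be the \emph{main obstacle}. Bijectivity of the canonical map yields directly only that the right-handed tensors $a^{[1]}\otimes a^{[2]}w$ fill $R\otimes R$, since ${\rm can}^{-1}(a\otimes w)=a^{[1]}\otimes a^{[2]}w$, whereas here the free variable $w$ sits on the wrong side of $a^{[2]}$. This is precisely where the hypothesis that the antipode of $A$ is bijective must be used: bijectivity of $S$ is equivalent to bijectivity of the opposite canonical map $x\otimes y\mapsto y_{(-1)}\otimes xy_{(0)}$, and I would invoke the resulting second translation map (equivalently, the inverses $S_{Y,X}^{-1}$ available in the connected cogroupoid $\coc$) to produce the left-handed spanning family and thereby close the argument.

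Finally, the relation $\sigma\theta=\tau$ holds by construction in both directions, and the transfer of semi-colinearity is formal. Semi-$A$-colinear maps are closed under composition, the associated Hopf automorphisms composing accordingly, and under inverse, an automorphism semi-colinear with respect to $\rho$ having inverse semi-colinear with respect to $\rho^{-1}$. Since $\tau=S_{Y,X}S_{X,Y}$ is semi-colinear by Example \ref{ex:semico} and $\sigma=\tau\theta^{-1}$ while $\theta=\sigma^{-1}\tau$, it follows at once that $\sigma$ is semi-colinear if and only if $\theta$ is.
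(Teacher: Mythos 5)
Your direction $(1)\Rightarrow(2)$, your identity $(\dagger)$, and your transfer of semi-colinearity are all correct and essentially identical to the paper's argument. The gap is exactly where you flagged it: the spanning statement in $(2)\Rightarrow(1)$ is never proved, and the route you gesture at does not deliver it. Bijectivity of the opposite canonical map $x\otimes y\mapsto y_{(-1)}\otimes xy_{(0)}$ produces a second translation map $\kappa'(a)=a^{\{1\}}\otimes a^{\{2\}}$ characterized by $a^{\{2\}}_{(-1)}\otimes a^{\{1\}}a^{\{2\}}_{(0)}=a\otimes 1$, and hence the spanning family $\{w\,a^{\{1\}}\otimes a^{\{2\}}\}$, in which the free variable $w$ multiplies the \emph{first} tensor leg. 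What you need is $\{a^{[1]}\otimes w\,a^{[2]}\}$, with $w$ multiplying the \emph{second} leg. In cogroupoid terms $\kappa'(a)=S_{X,Y}^{-1}(a_{(2)}^{YX})\otimes a_{(1)}^{XY}$ while $\kappa(a)=a_{(1)}^{XY}\otimes S_{Y,X}(a_{(2)}^{YX})$, and passing from one family to the other is not a formal substitution: it would require, roughly, that $(\id\otimes S_{X,Y}S_{Y,X})\Delta_{X,X}^Y(A)$ lies in $\Delta_{X,X}^Y(A)$, which is not clear. So as written the argument does not close, and this is the real content of the lemma.

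The spanning you want is in fact true, but for a different reason than you suggest: the map $A\otimes R\to R\otimes R$, $a^{XX}\otimes w\mapsto a_{(1)}^{XY}\otimes wS_{Y,X}(a_{(2)}^{YX})$, is bijective with explicit inverse $x^{XY}\otimes y\mapsto x_{(1)}^{XX}\otimes y\,S_{Y,X}S_{X,Y}(x_{(2)}^{XY})$; the verification uses only that $S_{Y,X}$ is an anti-algebra map together with the cogroupoid antipode and counit axioms, e.g. $S_{Y,X}S_{X,Y}(x_{(3)}^{XY})S_{Y,X}(x_{(2)}^{YX})=S_{Y,X}\bigl(x_{(2)}^{YX}S_{X,Y}(x_{(3)}^{XY})\bigr)=\varepsilon_Y(x_{(2)}^{YY})1$. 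Note this needs no bijectivity of the antipode at all, so your diagnosis of where that hypothesis enters is also off; it is needed only to guarantee that $\tau=S_{Y,X}S_{X,Y}$ is bijective, so that $\sigma$ and $\theta$ determine each other as automorphisms. Moreover, this verification is precisely the insert-and-collapse computation that the paper performs directly on $\psi$, bypassing any spanning argument: for $x=a^{XY}$ one writes $\theta(x)y=\theta(a_{(1)}^{XY})\,\bigl(yS_{Y,X}S_{X,Y}(a_{(3)}^{XY})\bigr)\,S_{Y,X}(a_{(2)}^{YX})$, since the inserted factor collapses to a counit term, and then hypothesis $(2)$ applied with $a_{(1)}^{XX}$ yields $\psi(\theta(x)y)=\psi\bigl(y\,S_{Y,X}S_{X,Y}(x)\bigr)$ for all $x,y\in R$, whence $\psi(xy)=\psi(y\sigma(x))$ with $\sigma=S_{Y,X}S_{X,Y}\theta^{-1}$. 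Either prove the bijectivity above and keep your functional-equality framework, or adopt the paper's direct computation; in both cases the missing step is the same calculation, which your proposal omits.
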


\begin{proof}
Let $\mathcal C$ be a connected groupoid with objects $X,Y$ such that $A=\mathcal C(X,X)$ and $R = \mathcal C(X,Y)$, so that for $a=a^{XX} \in A$, we have $a^{[1]} \otimes a^{[2]} = a_{(1)}^{XY} \otimes S_{Y,X}(a^{YX}_{(2)})$.   

Assume first that $\psi$ is a twisted trace with respect to an automorphism $\sigma : R\to R$, and let $\theta = \sigma^{-1} S_{Y,X} S_{X,Y} : R \to R$. We have, for $x\in R$ and $a= a^{XX}\in A$
\begin{align*}
 \psi\left (\theta(a^{[1]}) x a^{[2]}\right)& = \varphi\left (\theta(a_{(1)}^{XY}) x S_{Y,X}(a^{YX}_{(2)})\right) 
= \psi\left (\sigma^{-1}(S_{Y,X}S_{X,Y}(a_{(1)}^{XY})) x S_{Y,X}(a^{YX}_{(2)})\right) \\
& = \psi\left ( x S_{Y,X}(a^{YX}_{(2)})S_{Y,X}S_{X,Y}(a_{(1)}^{XY})\right) 
=  \psi\left ( x S_{Y,X}(S_{X,Y}(a_{(1)}^{XY})a^{YX}_{(2)}))\right) \\
&= \varepsilon_{X,X}(a^{XX}) \psi(x) =\varepsilon(a)\psi(x)
\end{align*}
 Conversely, start $\theta$ satisfying the above condition in (2) and let $\sigma = S_{Y,X}S_{X,Y}\theta^{-1}$. For $x= a^{XY}\in R$ and $y\in R$, we have 
\begin{align*}
 \psi(\theta(x)y) & = \psi(\theta(a^{XY})y)=  \psi\left(\theta(a^{XY}_{(1)})yS_{Y,X}S_{X,Y}(a_{(3)}^{XY})S_{Y,X}(a_{(2)}^{YX})\right) \\
& = \varepsilon_{X,X}(a_{(1)}^{XX})\psi\left(yS_{Y,X}S_{X,Y}(a_{(2)}^{XY})\right) = 
\psi\left(yS_{Y,X}S_{X,Y}(x)\right)
\end{align*}
and hence 
$$\psi(xy) = \psi(\theta(\theta^{-1}(x)y) = \psi(y S_{Y,X}S_{X,Y}\theta^{-1}(x)) = \psi(y\sigma(x))$$
which indeed shows that $\psi$ is a twisted trace with respect to $\sigma$. Clearly our construction relates $\sigma$ and $\theta$ as in the statement, and the last assertion follows from Example \ref{ex:semico}.
\end{proof}

\begin{theorem}\label{thm:tshg}
 Let $A$ be a Hopf algebra with bijective antipode and let $R$ be a left $A$-Galois object. Assume that $R$ admits a unital  twisted trace with respect to a semi-A-colinear automorphism of $R$. Then the canonical functor ${_A\mathcal M} \to {_R\mathcal M}_R$ is twisted separable.
\end{theorem}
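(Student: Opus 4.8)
The strategy is to verify the hypotheses of the left-adjoint criterion Theorem~\ref{thm:raftw}(1) for the adjunction $(F_R,\kappa_*)$ supplied by Proposition~\ref{prop:adjcan}. First I would feed the given unital twisted trace $\psi$ (with respect to the semi-$A$-colinear automorphism $\sigma$) into Lemma~\ref{lem:tt-my} to produce an algebra automorphism $\theta\colon R\to R$ satisfying $\psi(\theta(a^{[1]})\,x\,a^{[2]})=\varepsilon(a)\psi(x)$ for all $a\in A$, $x\in R$; by the last sentence of that lemma $\theta$ is again semi-$A$-colinear, say with respect to a Hopf algebra automorphism $\rho$ of $A$. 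I then propose the twisted separability datum for $F_R$ as follows: let $\Theta$ be the autoequivalence of ${_R\mathcal M}_R$ twisting the right $R$-action by $\theta^{-1}$ and leaving the left action unchanged, let $\mathcal F$ be the generating class of free $A$-modules $A\otimes V$, and define $\nu_X=\mathrm{id}_X\otimes\psi\colon\kappa_*\Theta F_R(X)\to X$. For each $P=A\otimes V$ the bimodule $F_R(P)$ is free over $R\otimes R^{\mathrm{op}}$, the rank-one case $F_R(A)\cong R\otimes R$ being realized by the canonical map $\mathrm{can}$, so the canonical twist isomorphism $\theta_P\colon F_R(P)\to\Theta F_R(P)$ of \eqref{thetaP} (applied on the free right leg) is available.

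The algebraic input of the semi-colinearity hypothesis that I would isolate at the outset is the identity $(\theta\otimes\theta)\circ\kappa=\kappa\circ\rho$, equivalently $\theta(a^{[1]})\otimes\theta(a^{[2]})=\rho(a)^{[1]}\otimes\rho(a)^{[2]}$. This holds because $\kappa(b)=\mathrm{can}^{-1}(b\otimes 1)$ is characterized by the second identity in \eqref{eq:kappa}, namely $b^{[1]}_{\ (-1)}\otimes b^{[1]}_{\ (0)}b^{[2]}=b\otimes 1$, and a short computation applying semi-colinearity of $\theta$ shows that $\theta(a^{[1]})\otimes\theta(a^{[2]})$ satisfies exactly this characterizing equation with $b=\rho(a)$. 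Applying this to $\rho^{-1}(a)$ one gets the companion $\theta^{-1}(a^{[1]})\otimes\theta^{-1}(a^{[2]})=\rho^{-1}(a)^{[1]}\otimes\rho^{-1}(a)^{[2]}$, which is what will actually be used.

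The heart of the argument, and what I expect to be the main obstacle, is the $A$-linearity of each $\nu_X$ for the Miyashita--Ulbrich action, i.e. that $\nu_X$ is a genuine morphism in ${_A\mathcal M}$. Unwinding the $\Theta$-twisted action through \eqref{eq:idkappa3} gives $a\cdot(v\otimes y)=a_{(1)}\cdot v\otimes a_{(2)}^{\ \ [1]}\,y\,\theta^{-1}(a_{(2)}^{\ \ [2]})$, so $A$-linearity of $\mathrm{id}_X\otimes\psi$ reduces to the twisted invariance $\psi(a^{[1]}\,y\,\theta^{-1}(a^{[2]}))=\varepsilon(a)\psi(y)$ for all $a\in A$, $y\in R$. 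Rewriting the left-hand side via the twisted trace property $\psi(uv)=\psi(v\sigma(u))$, this in turn reduces to the antipode-type identity $\theta^{-1}(a^{[2]})\,\sigma(a^{[1]})=\varepsilon(a)1$. I would prove the latter by starting from the cogroupoid identity $a^{[2]}\,\sigma\theta(a^{[1]})=\varepsilon(a)1$ (obtained, as in the proof of Lemma~\ref{lem:tt-my}, from the antipode axiom $S_{X,Y}(a_{(1)}^{XY})a_{(2)}^{YX}=\varepsilon(a)1$ together with $S_{Y,X}$ being an anti-algebra map), then substituting $a\mapsto\rho^{-1}(a)$ and invoking the companion identity of the previous paragraph to convert $a^{[2]}\,\sigma\theta(a^{[1]})$ into $\theta^{-1}(a^{[2]})\,\sigma(a^{[1]})$. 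This is precisely where the semi-colinearity hypothesis is indispensable: without trivializing the interaction between $\theta$ and the coaction through $(\theta\otimes\theta)\kappa=\kappa\rho$, one only obtains a natural transformation valued in the $\rho$-twisted functor $\rho_*$ (restriction along $\rho$) rather than in $1_{{_A\mathcal M}}$, and $\rho_*\not\simeq 1_{{_A\mathcal M}}$ in general.

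Finally I would dispatch the two remaining points of Theorem~\ref{thm:raftw}(1). Naturality of $\nu$ is immediate, since $\nu_X=\mathrm{id}_X\otimes\psi$ is manifestly natural in $X$ on underlying vector spaces. For the normalization $\nu_P\circ\kappa_*(\theta_P)\circ\eta_P=\mathrm{id}_P$ on $P=A\otimes V$, I would compute directly on the rank-one case and carry the inert factor $V$ along: $\eta_A$ sends $a$ to $a\otimes 1$, the twist $\theta_A$ turns this (through $\mathrm{can}^{-1}$) into $a^{[1]}_{\ (-1)}\otimes a^{[1]}_{\ (0)}\theta^{-1}(a^{[2]})$, and applying $\nu_A=\mathrm{id}_A\otimes\psi$ and then \eqref{eq:idkappa3} yields $\psi(a_{(2)}^{\ \ [1]}\theta^{-1}(a_{(2)}^{\ \ [2]}))\,a_{(1)}$, which equals $\varepsilon(a_{(2)})a_{(1)}=a$ by the twisted invariance just established together with $\psi(1)=1$. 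This checks all hypotheses of Theorem~\ref{thm:raftw}(1), so $F_R$ is twisted separable (indeed of standard type, with $\theta$ the relevant algebra automorphism).
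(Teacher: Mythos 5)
Your proof is correct; I checked the two identities on which it rests and they both hold. Semi-colinearity of $\theta$ does give $(\theta\otimes\theta)\circ\kappa=\kappa\circ\rho$ (apply $\mathrm{can}$ to $\theta(a^{[1]})\otimes\theta(a^{[2]})$ and use the second identity of \eqref{eq:kappa}), and combining the cogroupoid identity $a^{[2]}\sigma\theta(a^{[1]})=\varepsilon(a)1$ with the substitution $a\mapsto\rho^{-1}(a)$ does yield $\theta^{-1}(a^{[2]})\sigma(a^{[1]})=\varepsilon(a)1$, from which the twisted invariance $\psi\bigl(a^{[1]}y\,\theta^{-1}(a^{[2]})\bigr)=\varepsilon(a)\psi(y)$, the $A$-linearity of $\mathrm{id}\otimes\psi$, and the normalization on free modules all follow as you say.

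While the skeleton is the same as the paper's (Theorem \ref{thm:raftw}(1) applied to the adjunction of Proposition \ref{prop:adjcan}, with $\mathcal F$ the free $A$-modules, $\theta$ produced by Lemma \ref{lem:tt-my}, and twist isomorphisms obtained by conjugating by $\mathrm{can}$ as in \eqref{thetaP}), your choice of autoequivalence is genuinely different, and it changes the shape of the argument. The paper takes $\Theta=(\theta\otimes\mathrm{id}_R)_*$, twisting the \emph{left} $R$-action by $\theta$; since this disturbs the leg of the bimodule that carries the $A$-coaction, the simple map $\mathrm{id}\otimes\psi$ is no longer $A$-linear, and the paper must use the more elaborate natural transformation $\psi_V(v\otimes x)=\psi(x_{(0)})\,S^{-1}(x_{(-1)})\cdot v$, verified by a longer Sweedler-type cogroupoid computation. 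Your $\Theta$ twists the \emph{right} action by $\theta^{-1}$ and leaves the left action alone, so the Miyashita--Ulbrich action keeps the factor $a_{(1)}\cdot v$ intact and the transformation $\mathrm{id}\otimes\psi$ from the untwisted case (Theorem \ref{thm:sephg}) survives verbatim; the twisting is instead absorbed into the two displayed identities above, which are essentially coordinate-free once the cogroupoid input is isolated. Note that the two autoequivalences $_{\theta}(-)$ and $(-)_{\theta^{-1}}$ are not naturally isomorphic in general (they correspond to tensoring with the bimodule $_{\theta}R$ on opposite sides), so these really are distinct twisted separability data. Your route is arguably the cleaner parallel of the separable case; the paper's makes the role of the inverse antipode explicit, whereas in yours the bijectivity of $S$ enters only through Lemma \ref{lem:tt-my} and the invertibility of the cogroupoid antipodes underlying $\theta^{-1}$.
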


\begin{proof}
 Let $\mathcal C$ be a connected groupoid with objects $X,Y$ such that $A=\mathcal C(X,X)$ and $R = \mathcal C(X,Y)$, so that for $a=a^{XX} \in A$, we have $a^{[1]} \otimes a^{[2]} = a_{(1)}^{XY} \otimes S_{Y,X}(a^{YX}_{(2)})$.  
Let $\psi : R\to k$ be a unital twisted trace with respect to a left $A$-semi-colinear automorphism  $\sigma : R\to R$, and let $\gamma : A\to A$ be the corresponding Hopf algebra automorphism : $\Delta_{X,Y}^X \circ \sigma = (\gamma \otimes \sigma) \circ \Delta_{X,Y}^X$. We put, as in the proof of Lemma \ref{lem:tt-my}, $\theta = \sigma^{-1}S_{Y,X}S_{X,Y}$, so that  $\psi\left (\theta(a^{[1]}) x a^{[2]}\right)=\varepsilon(a)\psi(x)$ for any $a\in A$ and $x\in R$. Define also the Hopf algebra automorphism $\rho : A\to A$ by $\rho = \gamma^{-1} S^2$ (with $S^2=S^2_{X,X}$), so that $\Delta_{X,Y}^X \circ \theta = (\rho \otimes \theta)\circ \Delta_{X,Y}^X$. 

We have to construct a twisted separability datum $(\mathcal F, \Theta, \delta_{-})$ for $F_R$. We take $\mathcal F$ the class of free $A$-modules and $\Theta =(\theta\otimes {\rm id}_R)_*$, where we identify ${_R\mathcal M}_R$ and ${_{R\otimes R^{\rm op}}\mathcal M}$ in the usual way. For a free $A$-module $P = A\otimes V$, we define 
\begin{align*}
 \delta_P : (A\otimes V) \odot R &\longrightarrow  (\theta\otimes {\rm id})_*(P\odot R) \\
a^{XX} \otimes v \otimes x & \longmapsto \rho(a^{XX}_{(1)}) \otimes v \otimes \theta(a_{(2)}^{XY})S_{Y,X}(a_{(3)}^{YX})x 
\end{align*}
The map $\delta_P$ is the unique one making the following diagram commutative:
$$\xymatrixcolsep{5pc}
\xymatrix{ 	
	(A\otimes V) \odot R \ar[r]^-{\delta_P} & (\theta\otimes {\rm id})_*((A\otimes V) \odot R)  \\
	R\otimes V \otimes R  \ar[u]^-{{\rm can}} \ar[r]^-{\theta \otimes {\rm id}_V \otimes {\rm id}_R} & (\theta\otimes {\rm id})_*(R\otimes V \otimes R) \ar[u]^-{{\rm can}}
}$$ 
where ${\rm can}$ is the obvious adaptation the canonical map, inserting $V$ in the middle.
Hence, since all the involved map are $R$-bimodule isomorphisms, so is $\delta_P$.

Define for any left $A$-module $V$,
\begin{align*}
 \psi_V : V\otimes R & \longrightarrow V \\
v \otimes x & \longmapsto \psi(x_{(0)}) S^{-1}(x_{(-1)})\cdot v
\end{align*}
We wish to show that $\psi_V : \kappa_*((\theta\otimes {\rm id}_R)_*(V \odot R)) \to V$ is left $A$-linear. Let $a=a^{XX} \in A$, $v\in V$ and $x=b^{XY}\in R$. We have
\begin{align*}
 \psi_V(a^{XX}&\cdot (v\otimes x))  = \psi_V\left( \theta(a_{(1)}^{XY})_{(-1)}\cdot v \otimes \theta(a_{(1)}^{XY})_{(0)} x S_{Y,X}(a_{(2)}^{YX})\right) \\
& = \psi_V\left( \rho(a_{(1)}^{XX})\cdot v \otimes \theta(a_{(2)}^{XY}) x S_{Y,X}(a_{(3)}^{YX})\right) \\
&= \psi\left(\theta(a_{(3)}^{XY}) x_{(0)} S_{YX}(a_{(4)}^{YX})\right) S^{-1}\left(\rho(a_{(2)}^{XX})x_{(-1)}S_{XX}(a_{(5)}^{XX})\right) \cdot \rho(a^{XX}_{(1)})\cdot v \\
& = \psi\left(\theta(a_{(3)}^{XY}) x_{(0)} S_{YX}(a_{(4)}^{YX})\right) 
a_{(5)}^{XX} S^{-1}(x_{(-1)})
\rho (S_{XX}^{-1}(a_{(2)}^{XX}))  \rho(a^{XX}_{(1)})\cdot v \\
& =  \psi\left(\theta(a_{(1)}^{XY}) x_{(0)} S_{YX}(a_{(2)}^{YX})\right) 
a_{(3)}^{XX} S^{-1}(x_{(-1)})\cdot v\\
& = \varepsilon(a_{(1)}^{XX})\psi(x_{(0)}) 
a_{(2)}^{XX} S^{-1}(x_{(-1)})\cdot v = a^{XX}\cdot \psi_V(v \otimes x)
\end{align*}
Hence $\psi_V$ is $A$-linear, and clearly this defines a natural transformation. For a free $A$-module $P=A\otimes V$ and $a=a^{XX}\in A$ and $v \in V$, we have
\begin{align*}
 \psi_P \circ \delta_P\circ \eta_P(a\otimes v) & = \psi_P\left( \rho(a^{XX}_{(1)}) \otimes v \otimes \theta(a_{(2)}^{XY})S_{YX}(a_{(3)}^{YX})x)\right) \\
& = \psi\left( \theta(a_{(3)}^{XY})S_{YX}(a_{(4)}^{YX})x)\right)S_{XX}^{-1}\left( \rho(a_{(2)}^{XX}) S_{XX}(a_{(5)}^{XX})\right)\rho(a^{XX}_{(1)})\otimes v \\
&= a^{XX}\otimes v
\end{align*}
Hence we have 
$\psi_P \circ \delta_P\circ \eta_P ={\rm id}_P$, and the properties of the natural transformation $\psi$
allow us to apply Theorem \ref{thm:tshg} to the left adjoint functor $F_R$, to conclude that it is indeed twisted separable.
\end{proof}

\begin{corollary}\label{cor:trace}
  Let $A$ be a Hopf algebra with bijective antipode and let $R$ be a left $A$-Galois object. If $R$ admits a unital trace, then the canonical functor ${_A\mathcal M} \to {_R\mathcal M}_R$ is twisted separable.
\end{corollary}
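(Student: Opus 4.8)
The plan is to deduce this corollary directly from Theorem \ref{thm:tshg} by observing that a unital trace is a special case of a unital twisted trace. Recall that by definition a trace is a twisted trace with respect to the identity automorphism $\sigma = \mathrm{id}_R$. Thus, given a unital trace $\psi$ on $R$, I immediately have a unital twisted trace with respect to $\sigma = \mathrm{id}_R$, and it only remains to verify that $\sigma = \mathrm{id}_R$ is a semi-$A$-colinear automorphism of $R$.

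The key step, then, is checking that the identity map is semi-$A$-colinear. This should be completely routine: taking $\rho = \mathrm{id}_A$ as the accompanying Hopf algebra automorphism, the defining condition $\rho(x_{(-1)}) \otimes \mathrm{id}(x_{(0)}) = \mathrm{id}(x)_{(-1)} \otimes \mathrm{id}(x)_{(0)}$ reduces to the trivial identity $x_{(-1)} \otimes x_{(0)} = x_{(-1)} \otimes x_{(0)}$, and $\rho = \mathrm{id}_A$ is certainly a Hopf algebra automorphism. So the identity is semi-$A$-colinear with respect to $\rho = \mathrm{id}_A$. This is precisely the observation already flagged in the introduction, where it is noted that a trace in the usual sense is always semi-colinear.

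With these two verifications in hand, the hypotheses of Theorem \ref{thm:tshg} are satisfied: $A$ has bijective antipode, $R$ is a left $A$-Galois object, and $R$ admits a unital twisted trace (namely $\psi$) with respect to the semi-$A$-colinear automorphism $\sigma = \mathrm{id}_R$. Theorem \ref{thm:tshg} then yields directly that the canonical functor ${_A\mathcal M} \to {_R\mathcal M}_R$ is twisted separable, which is exactly the assertion of the corollary.

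I do not expect any genuine obstacle here, since the corollary is essentially a specialization of the preceding theorem to the untwisted case. The only content is recognizing that the identity automorphism qualifies as semi-$A$-colinear, which is immediate. The proof will therefore be a single short paragraph invoking Theorem \ref{thm:tshg}.
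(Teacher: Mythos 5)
Your proposal is correct and follows exactly the same route as the paper's own proof: recognize a unital trace as a unital twisted trace with respect to $\sigma = \mathrm{id}_R$, note that $\mathrm{id}_R$ is semi-$A$-colinear with respect to $\rho = \mathrm{id}_A$, and invoke Theorem \ref{thm:tshg}. Nothing is missing.
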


\begin{proof}
A trace on $R$ is a twisted trace with respect to the identity of $R$, which is semi-colinear with respect to the identity of $A$, hence the result follows from Theorem \ref{thm:tshg}. 
\end{proof}

\begin{example}
 The Weyl algebra $$A_1(k)=k\langle x,y \ | \ xy-yx=1\rangle$$ is a Hopf-Galois object over $k[x,y]$, and the canonical functor ${_{k[x,y]}\mathcal M} \to {_{A_1(k)}\mathcal M}_{A_1(k)}$ is not separable . However, if $k$ has characteristic zero,  it is twisted separable 
\end{example}

\begin{proof}
 Let $\mathcal C$ be the cogroupoid having $\{0,1 \}$ as objects, with for $i,j \in\{0,1\}$ 
$$\coc(i,j) =k\langle x_{ij},y_{ij} \ | \ x_{ij}y_{ij}-y_{ij}x_{ij}=i-j\rangle$$
and structural maps defined by
$$\Delta_{ij}^k(x_{ij}) = 1\otimes x_{kj} + x_{ik}\otimes 1, \ \Delta_{ij}^k(y_{ij}) = 1\otimes y_{kj} + y_{ik}\otimes 1$$
$$\varepsilon_{ii}(x_{ii}) =0=\varepsilon(y_{ii}), \quad S_{ij}(x_{ij}) = -x_{ji}, \quad S_{ij}(y_{ij}) = -y_{ji}$$
We have $k[x,y]= \coc(1,1)$ and $A_{1}(k) = \coc(1,0)$. Since $S_{ij}S_{ji}={\rm id}$, we see from Theorem \ref{thm:sephg} and Lemma  \ref{lem:tt-my}
that the separability of the canonical functor would imply the existence of a unital trace on $A_{1}(k)$, which is impossible since $xy-yx=1$.

However, by \cite[Corollary 2.4]{ekrs}, there exists a unital twisted trace on $A_1(k)=\coc(1,0)$, with respect to the automorphism defined by $\alpha_t(x)=t^{-1}x$ and $\alpha_t(y)=ty$ for $t\in k^*$, $t\not=1$. It is easily seen that $\alpha_t$ is semi-$\coc(1,1)$-colinear with respect to the automorphism of $\coc(1,1)$ defined by the same formula as $\alpha_t$, and hence the twisted separability of the canonical functor follows from Theorem \ref{thm:tshg}.
\end{proof}

\subsection{Application to cohomological dimension} We now prove Theorem \ref{thm:cdhgtt}.  Let $A$ be a Hopf algebra with bijective antipode and let $R$ be a left $A$-Galois object that admits a twisted trace relative to an algebra automorphism of $R$ which is left $A$-semi-colinear. The canonical functor $F_R : {_A\mathcal M} \to {_R\mathcal M}_R$ is twisted separable by Theorem \ref{thm:tshg}, with the corresponding class $\mathcal F$ consisting of free $A$-modules, and it is obvious that $F_R$ is exact, and preserve projective objects since it is a left adjoint functor. Assuming that $\cd(A)$ is finite, we therefore apply Proposition \ref{prop:pdtsf}
to obtain
\[
\cd(A) = \pd_A(k) = \pd_{_R\mathcal M_R}(F_R(k))=\pd_{_R\mathcal M_R}(R) = \cd(R)
\]
which finishes the proof of Theorem \ref{thm:cdhgtt}.

\begin{remark}
Let $A$ be a finite-dimensional Hopf algebra and let $R$ be left $A$-Galois object. If the base field $k$ has characteristic zero, then $R$ always has a  unital trace since it is a finite-dimensional algebra, so it follows from Corollary \ref{cor:trace} that
 the canonical functor ${_A\mathcal M} \to {_R\mathcal M}_R$ is twisted separable. However, if $A$ is not semisimple, then  $\cd(A)=\infty$ (this is because Hopf algebras are self-injective algebras),  and if $R$ is semisimple, we have $\cd(R)=0 < \cd(A)$. Such a situation  holds, for example,  for Taft algebras, which have matrix algebras as Galois objects, see \cite{mas}.
Thus the assumption that $\cd(A)$ is finite is necessary to get the conclusion in Theorem \ref{thm:cdhgtt}.
\end{remark}

\section{Functors between module categories}\label{sec:tsfmod}

In this section, as a second application of Theorem \ref{thm:raftw}, we discuss twisted seprability in the classical situation  of functors between module categories that are induced by algebra maps, which was the very first example studied in \cite{nvdbvo}. 

Let $\varphi : A \to B$ be an algebra map. Associated to $\varphi$ are the restriction functor $\varphi_* : {_B \mathcal M} \to {_A\mathcal M}$ (already mentioned in Section \ref{sec:tsf}) and the induction functor $\varphi^* : {_A \mathcal M} \to {_B\mathcal M}$, $M \mapsto  B\otimes_AM$. The pair $(\varphi^*,\varphi_*)$ form a pair of adjoint functors whose respective counit and unit are given by
 \begin{align*}
  \eta_V : V &\longmapsto \varphi_*(B\otimes_A V) \quad \quad \varepsilon_M : B\otimes_A \varphi_*(M) \to M \\
v &\longmapsto 1\otimes_A v \quad \quad \quad \quad  \quad \quad\quad \quad b\otimes_A x \longmapsto b\cdot x 
 \end{align*}

\begin{proposition}\label{prop:twalgmor}
 Let $\varphi : A \to B$ be an algebra map.
\begin{enumerate}
 \item 
The following assertions are equivalent.
\begin{enumerate}
 \item The induction functor $\varphi^* : {_A \mathcal M} \to {_B\mathcal M}$ is twisted separable of standard type with the free $A$-module $A$ in the corresponding class $\mathcal F$.
\item  There exist an algebra automorphism $\theta$ of $B$ and a linear map $E : B \to A$ such that $E(1)=1$ and for any $a,a'\in A$ and any $b\in B$, we have
$$E\left(\theta\varphi(a)b\varphi(a')\right) = aE(b)a'$$
\end{enumerate}
\item The following assertions are equivalent.
\begin{enumerate}
 \item The restriction functor $\varphi_* : {_B \mathcal M} \to {_A\mathcal M}$ is twisted separable with twisted separability datum $(\theta_*, \mathcal F, \delta_{-})$ for an algebra automorphism $\theta$ of $A$ and the free $B$-module $B$ belongs to the class $\mathcal F$.
\item There exist an algebra automorphism $\theta$ of $A$, a linear isomorphism $\Gamma : B\to B$, and an element
$\sum_i b_i\otimes_A c_i \in B\otimes_A \theta^{-1}_*\varphi_*(B)$ such that for any $a\in A$ and $b \in B$, we have
$$\Gamma(\varphi(a)b) =\varphi\theta(a) \Gamma(b), \quad \sum_i  bb_i\otimes_A c_i = \sum_i b_i\otimes_A c_ib, \quad \sum_ib_i\Gamma(c_i)=1$$
\end{enumerate}
\end{enumerate}

\end{proposition}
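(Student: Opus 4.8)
The plan is to prove Proposition \ref{prop:twalgmor} by applying the adjoint-functor criterion Theorem \ref{thm:raftw} to the adjunction $(\varphi^*, \varphi_*)$ in each of the two cases, and then translating the abstract natural transformation produced there into concrete linear-algebraic data. The guiding principle is that, because the relevant classes $\mathcal F$ contain the free modules $A$ (resp. $B$), a natural transformation on all of ${_A\mathcal M}$ (resp. ${_B\mathcal M}$) is completely determined by its value on the free module of rank one, via the canonical isomorphism $\Hom(A\otimes V, M)\cong \Hom(V, M)$ that sends a module map to its restriction to $1\otimes V$. So each natural transformation should correspond to a single linear map, and the coherence condition of Theorem \ref{thm:raftw} should become the stated formulas.

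\medskip

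\noindent
\emph{Part (1).} Here $\varphi^*$ is the \emph{left} adjoint, so I would use part (1) of Theorem \ref{thm:raftw}. The twisted separability datum is of standard type with $\Theta = \theta_*$ for an algebra automorphism $\theta$ of $B$ and $\mathcal F$ the free $A$-modules. First I would compute the isomorphism $\theta_P : \varphi^*(P)\to \theta_*\varphi^*(P)$ prescribed by (\ref{thetaP}) on the generator $A$: since $\varphi^*(A)=B$, the map $\theta_A$ is just $\theta$ itself. The criterion then asks for a natural transformation $\nu : \varphi_*\theta_*\varphi^* \to 1_{{_A\mathcal M}}$ with $\nu_P\circ\varphi_*(\theta_P)\circ\eta_P={\rm id}_P$ for $P$ free. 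By the restriction-to-$1\otimes V$ principle, such a $\nu$ is equivalent to a single $A$-bimodule-type linear map $E : B\to A$ (the value $\nu_A$ under the standard identifications), where the $A$-bilinearity of $\nu_A$ translates into $E(\varphi(a)b\varphi(a'))=aE(b)a'$ and the coherence condition $\nu_A\circ\varphi_*(\theta_A)\circ\eta_A={\rm id}_A$ becomes $E(\theta\varphi(a))=a$, i.e. $E(1)=1$ together with the twist by $\theta$ appearing on the left. Assembling these gives exactly condition (b), namely $E(\theta\varphi(a)b\varphi(a'))=aE(b)a'$ and $E(1)=1$. The converse direction simply defines $\nu_V = {\rm id}_V\otimes E$ (after identifying $\theta_*\varphi^*(V)$ with $B\otimes_A V$ as a vector space) and checks $A$-linearity and the coherence identity directly from the formulas in (b).

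\medskip

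\noindent
\emph{Part (2).} Now $\varphi_*$ is the \emph{right} adjoint, so I would invoke part (2) of Theorem \ref{thm:raftw}, which is the more technical statement. I fix the adjoint-equivalence data $(\Theta^{-1}, u, v)$ for $\Theta=\theta_*$; concretely $\theta_*^{-1}=(\theta^{-1})_*$ with $u, v$ the obvious structure isomorphisms. The criterion produces a natural transformation $\xi : 1_{{_B\mathcal M}}\to \varphi^*\theta_*^{-1}\varphi_*$ subject to $\varepsilon_P\circ\varphi^*(v_{\varphi_*(P)}\circ\theta_*^{-1}(\theta_P))\circ\xi_P={\rm id}_P$ on the free module $P=B$. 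Again restricting to the generator, the component $\xi_B : B\to B\otimes_A\theta^{-1}_*\varphi_*(B)$ is $B$-linear, hence determined by the image of $1$, which I would write as $\sum_i b_i\otimes_A c_i$; its $B$-linearity yields the centralizing relation $\sum_i bb_i\otimes_A c_i=\sum_i b_i\otimes_A c_i b$, and the isomorphism $\theta_B$ of standard type contributes the twisted linear automorphism $\Gamma:=\theta_B$ of $B$ satisfying $\Gamma(\varphi(a)b)=\varphi\theta(a)\Gamma(b)$. Unwinding the coherence condition through $\varepsilon$ and the identification of $v$ and $u$ turns $\varepsilon_B\circ(\cdots)\circ\xi_B({\rm id})$ into the normalization $\sum_i b_i\Gamma(c_i)=1$. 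This gives (b); the converse builds $\xi_X$ from the fixed element and $\Gamma$ and verifies naturality and the defining identity.

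\medskip

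I expect the main obstacle to be Part (2), specifically the careful bookkeeping of the adjoint-equivalence isomorphisms $u$ and $v$ together with the standard-type isomorphism $\theta_P$ when unwinding the coherence condition $\varepsilon_P\circ\varphi^*(v_{\varphi_*(P)}\circ\theta_*^{-1}(\theta_P))\circ\xi_P={\rm id}_P$. Getting $\Gamma$, the element $\sum_i b_i\otimes_A c_i$, and the three relations to come out in precisely the stated form requires tracking which twist lands on which tensor factor and over which side the automorphism $\theta$ acts; this is the asymmetry flagged in the Remark following Theorem \ref{thm:raftw}. By contrast, Part (1) should be essentially a transcription of the computation already performed in the proof of Theorem \ref{thm:sephg}, twisted by $\theta$, and I would present it first as the model for the bookkeeping in Part (2).
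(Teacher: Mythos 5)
Your proposal matches the paper's proof in both structure and detail: each part is obtained by specializing Theorem \ref{thm:raftw} to the adjunction $(\varphi^*,\varphi_*)$, encoding the natural transformation by its component at the rank-one free module ($E=\nu_A(-\otimes_A 1_A)$ in part (1); $\Gamma=\delta_B$ and $\sum_i b_i\otimes_A c_i=\xi_B(1)$ in part (2), with the identity serving as the adjoint equivalence since $\theta_*(\theta^{-1})_*=1=(\theta^{-1})_*\theta_*$), and then reconstructing the transformation from these data for the converse directions, exactly as the paper does. Two cosmetic corrections for the write-up: the right-hand compatibilities ($E(b\varphi(a'))=E(b)a'$ and $\xi_B(b)=\sum_i b_i\otimes_A c_i b$) follow from naturality applied to right-multiplication maps, not from (bi)linearity of the single component alone; and in part (2) the isomorphism $\delta_B$ is \emph{not} ``of standard type'' (the datum there is a general one, with $\theta$ an automorphism of $A$, and $\varphi_*(B)$ need not be $A$-free) --- its $A$-linearity as a map $\varphi_*(B)\to\theta_*\varphi_*(B)$ is precisely what yields $\Gamma(\varphi(a)b)=\varphi\theta(a)\Gamma(b)$.
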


\begin{proof}
(1) Assume that we are given $E : B\to A$ and an automorphism $\theta$ of $B$ with the above properties, and let us construct a twisted separability $(\Theta, \mathcal F, \theta_{-})$ datum of standard type for $\varphi^*$. We take $\Theta =\theta_*$ and $\mathcal F$ the class of free $A$-modules. Then $\varphi^*(P)=B\otimes_AP$ is a free $B$-module if $P$ is a free $A$-module, and we take the isomorphism $\theta_P$ as in \ref{thetaP}.  Since $E(b\varphi(a)) = E(b)a$ for any $\in B$ and $a \in A$, we can define, for any left $A$-module $V$, a linear map
\begin{align*}
 \nu_V : B\otimes_A V &\longrightarrow V \\
b\otimes_Av &\longmapsto E(b).v 
\end{align*}
Using our condition on $E$ and $\theta$, it is immediate that $\nu_V$ defines an $A$-linear map $\varphi_*(\theta_*(B\otimes_A V))\to V$, and clearly $\nu$ defines a natural transformation. Finally the verification that for a free $A$-module $P$ one has $\nu_P\circ \varphi_*(\theta_P) \circ\eta_P={\rm id}_P$ is immediate, so we conclude from Theorem \ref{thm:raftw} that $\varphi^*$ is indeed twisted separable, and is of standard type by construction. 

Conversely assume that $\varphi^*$ is twisted separable of standard type, with corresponding twisted separability datum 
$(\theta_*, \mathcal F, \theta_{-})$ for an automorphism $\theta$ of $B$. Let $$\nu : \varphi_*\theta_*(B\otimes_A \bullet) \to \bullet$$ be the natural transformation provided by Theorem \ref{thm:raftw}. We consider $$\nu_A : \varphi_*\theta_*(B\otimes_AA)\to A$$ and put $E(b)=\nu_A(b\otimes_A 1_A)$. The $A$-linearity of $\nu_A$ gives $E\left(\theta\varphi(a)b\right) = aE(b)$ for any $a \in A$ and $b\in B$, while the naturality of $\nu$, applied to the $A$-linear map $A\to A$, $a' \mapsto a'a$, gives  $E(b)a= \nu_A(b\otimes 1_A)a=\nu_A(b\otimes_A a) = \nu_A(b\varphi(a)\otimes_A1_A)=E(b\varphi(a))$. We also have $E(1)=1$ since $\nu_A\circ\varphi_*(\theta_A)\circ \eta_A={\rm id}_A$, and hence $E$ and $\theta$ satisfy all the required conditions.

(2) Assume that $\varphi_*$ is twisted separable, with twisted separability datum 
$(\theta_*, \mathcal F, \delta_{-})$ for an automorphism $\theta$ of $A$ and with $B$ an object of $\mathcal F$. We therefore have an isomorphism of $A$-modules $\Gamma=\delta_B : \varphi_*(B) \to \theta_*\varphi_*(B)$ whose $A$-linearity gives, for $a\in A$ and $b\in B$, $\Gamma(\varphi(a)b) =\varphi\theta(a) \Gamma(b)$. We also have, by Theorem \ref{thm:raftw}, a natural transformation
$$\xi : \bullet \to B\otimes_A \theta^{-1}_*\varphi_*(\bullet)$$
We put $\xi_B(1) = \sum_ib_i\otimes_A c_i$. The left $B$-linearity of $\xi_B$, together with the naturality of $\xi$, give for $b\in B$, $ \sum_i  bb_i\otimes_A c_i = \xi_B(b) =\sum_i b_i\otimes_A c_ib$. Finally the identity $\sum_ib_i\Gamma(c_i)=1$ follows from the fact that the composition 
$$B \overset{\xi_B}\longrightarrow B\otimes_A \theta_*^{-1} \varphi_*(B) \overset{{\rm id}_B\otimes_A\Gamma}\longrightarrow B\otimes_A\varphi_*(B) \overset{\varepsilon_B}\longrightarrow B$$
is the identity of $B$ (see Theorem 3.1, we can take the identity as adjoint equivalence there since $\theta_*\theta^{-1}_*={\rm 1}= \theta^{-1}_*\theta_*$).

Conversely, assume given $\theta$, $\Gamma$ and $\sum_i b_i\otimes_A c_i \in B\otimes_A \theta^{-1}_*\varphi_*(B)$ as in the statement.  Let us construct a twisted separability $(\Theta, \mathcal F, \theta_{-})$ datum for $\varphi_*$. We take $\Theta = \theta_*$ and $\mathcal F$ the class of free $B$-modules.
We can see $\Gamma$ as an isomorphism $\varphi_*(B) \to \theta_*\varphi_*(B)$, and this extends to an isomorphism 
$\delta_P : \varphi_*(P) \to \theta_*\varphi_*(P)$ for any free $B$-module $P$.

We define $\xi_M : M \to B\otimes_A \theta_*^{-1}\varphi_*(M)$ by $\xi_M(x) = \sum_i b_i\otimes_A c_i\cdot x$ for any $B$-module $M$. It is clear that $\xi_M$ is linear and that this defines a natural transformation $\xi : \bullet \to B\otimes_A \theta^{-1}_*\varphi_*(\bullet)$.
The identity $\sum_ib_i\Gamma(c_i)=1$ implies that the composition
$$B \overset{\xi_B}\longrightarrow B\otimes_A \theta_*^{-1} \varphi_*(B) \overset{{\rm id}_B\otimes_A\Gamma}\longrightarrow B\otimes_A\varphi_*(B) \overset{\varepsilon_B}\longrightarrow B$$
is the identity of $B$, and this is easily seen to hold for any free $B$-module. Hence Theorem \ref{thm:raftw} ensures the twisted separability of $\varphi_*$ with the announced twisted separability datum.
\end{proof}

\begin{remark}
Consider again the situation when $A$ is a Hopf algebra and $R$ is a left $A$-Galois object. 
 Then the uniqueness of adjoint functors implies that the canonical functor $F_R : {_A\mathcal M} \to {_R\mathcal M}_R$ is in fact isomorphic with the induction functor $\kappa^*$, so an approach to study its twisted separability could be to use Proposition \ref{prop:twalgmor}. However, the general answer given by this result looks more obscure to us, and our feeling is that the path we followed in Section \ref{sec:apphg} is the most convenient.
\end{remark}

\end{document}